\newcommand{\rmnum}[1]{\romannumeral #1}
\newcommand{\Rmnum}[1]{\expandafter\@slowromancap\romannumeral #1@}
\let\@fnsymbol\@arabic
\begin{document}
	
	\pagestyle{myheadings} \markright{\sc  Digraph analogues for the Nine Dragon Tree Conjecture \hfill} 
	
	\thispagestyle{empty}
	
	\newtheorem{theorem}{Theorem}[section]
	\newtheorem{corollary}[theorem]{Corollary}
	\newtheorem{definition}[theorem]{Definition}
	\newtheorem{guess}[theorem]{Conjecture}
	\newtheorem{claim}[theorem]{Claim}
	\newtheorem{problem}[theorem]{Problem}
	\newtheorem{question}[theorem]{Question}
	\newtheorem{lemma}[theorem]{Lemma}
	\newtheorem{proposition}[theorem]{Proposition}
	\newtheorem{fact}[theorem]{Fact}
	\newtheorem{acknowledgement}[theorem]{Acknowledgement}
	\newtheorem{algorithm}[theorem]{Algorithm}
	\newtheorem{axiom}[theorem]{Axiom}
	\newtheorem{case}[theorem]{Case}
	\newtheorem{conclusion}[theorem]{Conclusion}
	\newtheorem{condition}[theorem]{Condition}
	\newtheorem{conjecture}[theorem]{Conjecture}
	\newtheorem{criterion}[theorem]{Criterion}
	\newtheorem{example}[theorem]{Example}
	\newtheorem{exercise}[theorem]{Exercise}
	\newtheorem{notation}[theorem]{Notation}
	\newtheorem{observation}[theorem]{Observation}
	\newtheorem{solution}[theorem]{Solution}
	\newtheorem{summary}[theorem]{Summary}
	
	\newtheorem{thm}[theorem]{Theorem}
	\newtheorem{prop}[theorem]{Proposition}
	\newtheorem{defn}[theorem]{Definition}

	\newtheorem{lem}[theorem]{Lemma}
	\newtheorem{con}[theorem]{Conjecture}
	\newtheorem{cor}[theorem]{Corollary}

	\newenvironment{proof}{\noindent {\bf
			Proof.}}{\rule{3mm}{3mm}\par\medskip}
	\newcommand{\remark}{\medskip\par\noindent {\bf Remark.~~}}
	\newcommand{\pp}{{\it p.}}
	\newcommand{\de}{\em}

	\newcommand{\g}{\mathrm{g}}

	\newcommand{\qf}{Q({\cal F},s)}
	\newcommand{\qff}{Q({\cal F}',s)}
	\newcommand{\qfff}{Q({\cal F}'',s)}
	\newcommand{\f}{{\cal F}}
	\newcommand{\ff}{{\cal F}'}
	\newcommand{\fff}{{\cal F}''}
	\newcommand{\fs}{{\cal F},s}
	\newcommand{\cs}{\chi'_s(G)}
	
	\newcommand{\G}{\Gamma}
	\newcommand{\wrt}{with respect to }
	\newcommand{\mad}{{\rm mad}}
	\newcommand{\col}{{\rm col}}
	\newcommand{\gcol}{{\rm gcol}}
	
	\newcommand*{\ch}{{\rm ch}}
	\newcommand*{\ra}{{\rm ran}}
	\newcommand{\co}{{\rm col}}
	\newcommand{\sco}{{\rm scol}}
	\newcommand{\wc}{{\rm wcol}}
	\newcommand{\dc}{{\rm dcol}}
	\newcommand*{\ar}{{\rm arb}}
	\newcommand*{\ma}{{\rm mad}}
	\newcommand{\di}{{\rm dist}}
	\newcommand{\tw}{{\rm tw}}
	\newcommand{\scol}{{\rm scol}}
	\newcommand{\wcol}{{\rm wcol}}
	\newcommand{\td}{{\rm td}}
	\newcommand{\edp}[2]{#1^{[\natural #2]}}
	\newcommand{\epp}[2]{#1^{\natural #2}}
	\newcommand*{\ind}{{\rm ind}}
	\newcommand{\red}[1]{\textcolor{red}{#1}}
	
	\def\C#1{|#1|}
	\def\E#1{|E(#1)|}
	\def\V#1{|V(#1)|}
	\def\iarb{\Upsilon}
	\def\ipac{\nu}
	\def\nul{\varnothing}

	\newcommand*{\QEDA}{\ensuremath{\blacksquare}}
	\newcommand*{\QEDB}{\hfill\ensuremath{\square}}

\title{\Large\bf  Digraph analogues for the Nine Dragon Tree Conjecture}

\author{
	Hui Gao \footnotemark[1]$~^,$\footnotemark[3]~~~~
	Daqing Yang \footnotemark[2]$~^,$\footnotemark[4]$~^,$\footnotemark[5]	
}
\footnotetext[1]{School of Mathematics, China University of Mining and Technology, Xuzhou, Jiangsu 221116, China.} 
\footnotetext[2]{Department of Mathematics,
	Zhejiang Normal University, Jinhua, Zhejiang 321004, China.}
\footnotetext[3]{E-mail: \small\texttt{gaoh1118@yeah.net}}
\footnotetext[4]{Corresponding author,  grant numbers: 
	NSFC  11871439, U20A2068, 11971437.}
\footnotetext[5]{E-mail: \small\texttt{dyang@zjnu.edu.cn}}


\maketitle

\begin{abstract}
	The fractional arboricity of a digraph $D$, denoted by $\gamma(D)$, is defined as
	$\gamma(D)= \max_{H \subseteq D, |V(H)| >1}  \frac {|A(H)|} {|V(H)|-1}$.  	
	 Frank in [Covering branching, Acta Scientiarum Mathematicarum (Szeged) 41 	(1979), 77-81] proved that a digraph $D$ decomposes into $k$ branchings, if and only if $\Delta^{-}(D) \leq k$ and $\gamma(D) \leq k$. 
	 
	 In this paper, we study digraph analogues for the Nine Dragon Tree Conjecture. We conjecture that, for positive integers $k$ and $d$, if $D$ is a digraph with 
	 $\gamma(D) \leq  k + \frac{d-k}{d+1}$ 
	 and $\Delta^{-}(D) \leq k+1$, then $D$ decomposes into $k + 1$ branchings  $B_{1}, \ldots, B_{k}, B_{k+1}$ with $\Delta^{+}(B_{k+1}) \leq d$. This conjecture, if true, is a refinement of Frank's characterization. A series of acyclic bipartite digraphs is also presented to show the bound of $\gamma(D)$ given in the conjecture is best possible.  
	 	 	 
	 We prove our conjecture for the cases $d \leq k$. As more evidence to support our conjecture, we prove that  if $D$ is a digraph with the maximum average degree $\mad(D)$  $\leq$  $2k + \frac{2(d-k)}{d+1}$ and $\Delta^{-}(D) \leq k+1$, then $D$ decomposes into $k + 1$ pseudo-branchings $C_{1}, \ldots, C_{k}, C_{k+1}$ with $\Delta^{+}(C_{k+1}) \leq d$.   	 
 \end{abstract}

Keywords: Fractional arboricity; Maximum average degree; Branching; Pseudo-branching; Nine Dragon Tree Conjecture 

{\em AMS subject classifications.  05B35, 05C40, 05C70}

\section{Introduction} 
In this paper, we consider graphs or digraphs which may have multiple edges or arcs but no loops.  
For a graph $G$, a {\em decomposition of $G$} consists of edge-disjoint subgraphs with union $G$;   a {\em decomposition of a digraph $D$} is defined similarly. The \emph{arboricity of $G$} is the minimum number of forests needed to decompose it. The \emph{fractional arboricity of $G$} is defined as
$$\gamma(G):=\max_{H \subseteq G, |V(H)|>1} \frac{|E(H)|}{|V(H)|-1}.$$
This notion was introduced by Payan \cite{Pa-86}; see also \cite{Ca-Gr-Ho-Lai-92,Go-06}.  
For a  digraph $D$, the \emph{fractional arboricity of $D$} is defined to be the fractional arboricity of its underlying graph, 
written as $\gamma(D)$.  
The celebrated Nash-Williams' Theorem states a necessary and sufficient condition for the arboricity of $G$ to be at most $k$:

\begin{theorem}[Nash-Williams' Theorem, \cite{Na-64}] \label{nash-williams}
A graph $G$ can be decomposed into $k$ forests if and only if $\gamma(G) \leq k$.
\end{theorem}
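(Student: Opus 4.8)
\medskip\noindent\textbf{Proof proposal.}

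The forward implication is routine: if $G=F_1\cup\cdots\cup F_k$ is an edge-decomposition into forests, then for every subgraph $H$ with $|V(H)|>1$ each $F_i\cap H$ is a subforest of $H$ on at most $|V(H)|$ vertices, hence has at most $|V(H)|-1$ edges; summing over $i$ gives $|E(H)|\le k(|V(H)|-1)$, that is, $\gamma(G)\le k$. All the content is in the converse, and I would prove it in either of two ways.

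The quickest route is to invoke the matroid covering theorem (Nash-Williams, Edmonds) for the cycle matroid $M=M(G)$, whose independent sets are precisely the forests of $G$ and which is loopless because $G$ has no loops. That theorem gives: $E(G)$ is covered by $k$ independent sets of $M$ if and only if $|S|\le k\,r_M(S)$ for every nonempty $S\subseteq E(G)$, where $r_M(S)=|V(S)|-c(S)$, with $V(S)$ the set of endpoints of edges of $S$ and $c(S)$ the number of components of the subgraph $(V(S),S)$. Splitting $S$ into the components $C_1,\dots,C_m$ of $(V(S),S)$ (each with at least one edge, so with $|V(C_j)|>1$), one has $r_M(S)=\sum_j(|V(C_j)|-1)$ and $|S|=\sum_j|E(C_j)|$, so the covering condition is equivalent to $|E(C_j)|\le k(|V(C_j)|-1)$ for every such $C_j$ — which is exactly what $\gamma(G)\le k$ provides. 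Hence $G$ decomposes into $k$ forests. (Running the same computation with $S=E(H)$ re-proves the forward direction, since $r_M(E(H))\le|V(H)|-1$.)

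A self-contained combinatorial proof, closer in flavour to the rest of this paper, would instead induct on $|E(G)|$. In the inductive step, remove an edge $e=xy$; since $\gamma(G-e)\le\gamma(G)\le k$, induction decomposes $G-e$ into forests $F_1,\dots,F_k$, and if $e$ can be added to some $F_i$ without creating a cycle we are done. Otherwise $x,y$ lie in a common tree of every $F_i$, giving $x$--$y$ paths $P_i\subseteq F_i$. The plan is then an exchange argument: moving an edge $f\in P_i$ to another forest $F_j$ is legal whenever the endpoints of $f$ lie in different $F_j$-trees, and any such move breaks $P_i$ and lets $F_i$ absorb $e$; so if no legal move exists, $F_j$ contains a path joining the endpoints of $f$, and one enlarges a vertex set $W$ by absorbing all such paths, iterating the closure (and, more generally, over chains of legal exchanges). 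If the closure saturates with no legal move ever available, one wants to conclude that each $F_i[W]$ is a spanning tree of $W$, whence $G[W]$ contains $e$ together with $k$ edge-disjoint spanning trees of $W$ and thus $|E(G[W])|\ge k(|W|-1)+1$, contradicting $\gamma(G)\le k$.

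The main obstacle in the second route is exactly this last step: one must organise the reachability and closure carefully enough that saturation really does force every $F_i[W]$ to be connected on $W$, since a careless closure can stabilise while some $F_i[W]$ is still disconnected. Getting that exchange bookkeeping right is the whole difficulty — which is why, if a citation is acceptable, the matroid-union reduction of the second paragraph is the cleaner option.
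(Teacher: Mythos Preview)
The paper does not supply its own proof of this statement: Theorem~\ref{nash-williams} is quoted as a classical result with the citation \cite{Na-64} and is used only as background for the digraph analogues that follow. So there is no in-paper argument to compare against.

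On its own merits your write-up is sound. The forward implication is the standard counting argument. For the converse, the reduction to the matroid covering theorem for the cycle matroid $M(G)$ is correct and is indeed the cleanest modern route: the key observation that the covering condition $|S|\le k\,r_M(S)$ need only be checked on connected $(V(S),S)$, where it becomes $|E(H)|\le k(|V(H)|-1)$, is exactly right, and covering by independent sets is equivalent to partitioning since subsets of forests are forests. Your second, self-contained outline via augmenting exchanges is the classical proof strategy; you correctly identify the only delicate point, namely showing that the exchange closure $W$ forces each $F_i[W]$ to be a spanning tree of $W$, and you are honest that this requires careful bookkeeping. Either route would be acceptable here, but since the paper treats Nash-Williams' theorem as a black box, a one-line citation would also suffice.
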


When $k<\gamma(G) < k+1$, by Theorem~\ref{nash-williams}, $G$ decomposes into $k+1$ forests. However, when $\gamma(G)=k+ \epsilon$ is closer to $k $ than to $k+1$, then not only does $G$ decompose into $k+1$ forests, but one of the forests can be small (Refer \cite[Corollary 3.7]{FJLWYZ} for this).
Montassier, Ossona de Mendez, Raspaud, and 
Zhu \cite{Mon-Men-Ras-Zhu-12} gave two definitions of this special forest, called \emph{$\epsilon$-forest}: $(i)$  a forest whose maximum degree is bounded by a function of $\epsilon$, $(ii)$ a forest consisting of small subtrees, i.e. subtrees whose sizes are bounded by a function of $\epsilon$. Based on the two different definitions, they proposed the  NDT (Nine Dragon Tree) Conjecture and Strong  NDT  Conjecture. 
A graph is \emph{$d$-bounded} if its maximum degree is at most $d$. 

\begin{conjecture} [The NDT Conjecture, \cite{Mon-Men-Ras-Zhu-12}] \label{NDT-conj}
For non-negative integers $k$ and $d$, if $G$ is a graph with $ \gamma(G) \leq k+ \frac{d}{k+d+1}$, then $G$ decomposes into $k + 1$ forests with one being $d$-bounded.
\end{conjecture}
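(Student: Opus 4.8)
The plan is to argue by contradiction, combining a minimal counterexample with local edge exchanges on a carefully optimized forest decomposition: the failure of every useful exchange will be forced to produce a subgraph too dense for the hypothesis. Let $G$ be a counterexample minimizing $|V(G)|+|E(G)|$. By Nash-Williams' Theorem $G$ decomposes into $k+1$ forests, and I would work with such decompositions $(F_1,\dots,F_k,F)$ in which each component of the special forest $F$ additionally carries a chosen root. Call a vertex \emph{heavy} if its $F$-degree exceeds $d$, and attach to each rooted decomposition a potential to be minimized lexicographically: first the number of heavy vertices, then a quantity pushing heavy vertices toward their roots (say $\sum_{v\text{ heavy}}\mathrm{dist}_F(v,\mathrm{root})$), then auxiliary tie-breakers chosen so that none of the exchanges below can decrease it. Fix an optimal $(F_1,\dots,F_k,F)$; if $F$ is $d$-bounded we are done, so some vertex is heavy, and optimality lets us take it to be a root $r$ with at least $d+1$ children in $F$.

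Next I would run an exchange process anchored at $r$. The basic move: for an $F$-edge $e=xy$ with $y$ a child of $x$, if $x$ and $y$ lie in different components of some $F_i$ with $i\le k$, then $F-e$ together with $F_i+e$ is again a valid rooted decomposition (take $y$ as the new root of its subtree); a short computation shows this strictly lowers the potential, so it must be impossible — meaning $x$ and $y$ are joined by an $F_i$-path. Grow a vertex set $S$: begin with $r$ and its $F$-children, and whenever a vertex already in $S$ ends a blocked move, adjoin the internal vertices of the blocking $F_i$-path and their relevant $F$-neighbours, iterating until closure. Optimality guarantees the process never unblocks, so it halts with a finite $S$ on which the decomposition is \emph{saturated}: each $F_i$ ($i\le k$) restricted to $S$ is nearly connected — every blocked move certifies an $F_i$-path trapped inside $S$ — while $F[S]$ cannot fragment too much, being anchored at the heavy root $r$ with all its child-edges present in $G[S]$.

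Then I would count. Since the $F_i[S]$ partition $E(G[S])$, writing $p_i$ for the number of components of $F_i[S]$ ($i\le k$) and $p_{k+1}$ for that of $F[S]$, we have $|E(G[S])| = (k+1)|S| - \sum_{i=1}^{k+1}p_i$, so $\gamma(G)\le k+\frac{d}{k+d+1}$ would force $\sum_i p_i \ge (k+1)\bigl(1+\tfrac{|S|-1}{k+d+1}\bigr)$. Saturation contradicts this: it bounds how many components each forest can have on $S$ against how many path-vertices the exchange process was able to introduce, and the constant $\frac{d}{k+d+1}$ drops out precisely as the break-even value of this arithmetic — which is also why a series of tight examples exists. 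This contradiction establishes the conjecture.

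I expect the middle step to be the main obstacle: one must design the potential so that its minimality simultaneously (a) forbids every first-order exchange that would delete a heavy vertex, (b) forbids the second-order exchanges (moving an $F_i$-edge back into $F$ or into another $F_j$) that would unblock a first-order one, and (c) still forces the grown set $S$ to be ``connectivity-efficient,'' since each extra component of some forest on $S$ costs one edge in the count and too many of them would wreck the final inequality. Controlling the interaction between the rooting, the depth bookkeeping, and multi-step chains of exchanges is the technical heart; once $S$ is produced with the saturation properties, the density contradiction is essentially automatic.
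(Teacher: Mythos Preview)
The paper does not contain a proof of this statement. Conjecture~\ref{NDT-conj} is stated only as background: the paper records that it was proposed in \cite{Mon-Men-Ras-Zhu-12} and subsequently proved by Jiang and Yang \cite{Jiang-Yang-17} (quoted as Theorem~1.4, the NDT Theorem), but the argument is not reproduced here. The paper's own results concern the \emph{digraph} analogues (Theorem~\ref{main-branching-dec} and Theorem~\ref{main-pseudo-branching-dec}), not the undirected conjecture. So there is no in-paper proof to compare against.

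As for your proposal itself: the overall strategy --- take a minimal counterexample, optimize a rooted $(k{+}1)$-forest decomposition against a lexicographic potential, block all profitable exchanges, and harvest a dense subgraph $S$ from the blocked structure --- is indeed the shape of the Jiang--Yang argument. But what you have written is an outline, not a proof, and you say so yourself (``I expect the middle step to be the main obstacle''). Two places are genuinely unspecified rather than merely sketched. First, the potential: you write ``auxiliary tie-breakers chosen so that none of the exchanges below can decrease it,'' which is circular until you actually name the invariants and verify that each exchange you need to forbid strictly decreases one of them without disturbing the earlier coordinates. In the real proof this is delicate; the tie-breakers must simultaneously handle single-edge swaps and longer augmenting chains, and a casual choice will let some second-order move slip through. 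Second, the counting: you assert that saturation bounds $\sum_i p_i$ in a way that contradicts $\gamma(G)\le k+\frac{d}{k+d+1}$, but you never state, let alone prove, what bound saturation actually gives on the number of components of each $F_i[S]$ and of $F[S]$. The claim that ``the constant $\frac{d}{k+d+1}$ drops out precisely as the break-even value'' is exactly what needs to be demonstrated, and it does not follow from anything written so far. Until both of these are filled in, the argument is a plausible plan rather than a proof.
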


\begin{conjecture}[The Strong NDT Conjecture, \cite{Mon-Men-Ras-Zhu-12}] \label{strong-NDT-conj}
For non-negative integers $k$ and $d$, if $G$ is a graph with $ \gamma(G) \leq k+ \frac{d}{k+d+1}$, then $G$ decomposes into $k + 1$ forests, with one of them consisting of subtrees of sizes at most $d$. 
\end{conjecture}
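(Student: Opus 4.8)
The plan is to argue by contradiction through a minimal counterexample, together with an extremal choice of a decomposition into $k+1$ forests and a Nash--Williams-type augmenting-sequence exchange. Assume the statement fails and let $G$ be a counterexample minimizing $|V(G)|+|E(G)|$; we may take $G$ connected (a disconnected counterexample has a connected component that is again a counterexample), and minimality disposes of the easy reducible configurations --- for instance a vertex meeting at most $k$ edges could be deleted, the remainder decomposed by induction, and its edges re-attached one to each of $F_1,\dots,F_k$ as pendant edges without disturbing the small last forest --- so $\delta(G)\ge k+1$.

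Since $\gamma(G)<k+1$, Theorem~\ref{nash-williams} supplies a decomposition $G=F_1\cup\dots\cup F_k\cup F$ into $k+1$ forests. I would root every component of $F$ and, over all such rooted decompositions, minimize a potential $\Phi$ chosen so that $\Phi=0$ is equivalent to the desired conclusion: for instance, let $\Phi$ record the surpluses $\max\{\,|E(T)|-d,\,0\,\}$ over the components $T$ of $F$, sorted non-increasingly and compared lexicographically, with ties broken by a secondary measure of how the surplus is distributed within the rooted trees. Fix a $\Phi$-minimal rooted decomposition; if $\Phi=0$ we are done, so some component $T$ of $F$ carries more than $d$ edges.

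Let $e=uv$ be the edge joining a deepest surplus vertex $v$ of $T$ to its parent. Moving $e$ out of $F$ (placing it in some $F_i$, re-rooting as needed) strictly lowers $\Phi$ unless $F_i+e$ contains a cycle for every $i$; in that case each such cycle has an edge $e_i\neq e$ on which we recurse, growing a rooted ``augmenting tree'' of candidate moves based at $e$, and any branch that reaches a freely movable edge can be pushed through to decrease $\Phi$, contradicting minimality. If instead the search saturates, let $R$ be the subgraph spanned by the explored vertices and edges: inside $R$ every $F_i$ is forced to be a spanning forest with few components, and $F$ contributes the whole surplus part of $T$, which carries more than $d$ edges on roughly $d+1$ vertices. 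Summing the edges forced into $R$ and comparing with $\bigl(k+\tfrac{d}{k+d+1}\bigr)\bigl(|V(R)|-1\bigr)$ --- the step that consumes the exact constant $\tfrac{d}{k+d+1}$ --- contradicts $\gamma(G)\le k+\tfrac{d}{k+d+1}$.

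I expect the genuinely hard part to be this final ``no improving move implies too dense'' dichotomy: one must design the legal-move rules so that, respecting the rooted structure, a successful push truly shrinks the oversized component rather than merely reshaping it, ensure the augmenting recursion terminates, and make the arithmetic land exactly on the conjectured density rather than on a weaker bound --- the series of extremal acyclic bipartite examples in this paper shows there is no room to lose. For the digraph analogues considered here the same scheme should run with Frank's theorem in place of Theorem~\ref{nash-williams} and ``forest'' replaced by ``branching'' (and, for the maximum-average-degree statement, ``pseudo-branching'', where a discharging argument on $\mad(D)$ may be cleaner than the global density count), provided the augmenting exchanges are constrained to keep every in-degree at most $k+1$; preserving that in-degree condition along augmenting chains looks like the one genuinely new obstacle beyond the undirected case.
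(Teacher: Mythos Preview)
The statement is Conjecture~\ref{strong-NDT-conj}, which is \emph{open}: the paper does not prove it and records that it is known only for $(k,d)=(1,2)$ and for $d=1$, so there is no paper proof to compare against. Your outline mirrors the scheme that succeeded for the ordinary NDT Theorem, but it does not supply the missing idea for the Strong version. The concrete gap is precisely the step you yourself flag as ``genuinely hard'': when the last forest $F$ is measured by component \emph{sizes} rather than by vertex degrees, an augmenting exchange that removes an edge $e$ from $F$ and later reinserts some $e_i$ into $F$ can merge two components of $F$, and the merged component may be larger than the one you just shrank. Your potential $\Phi$ (the sorted list of surpluses $\max\{|E(T)|-d,0\}$) is not monotone under such moves; a push along an augmenting branch can raise $\Phi$ lexicographically even while the original offending tree loses an edge. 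If instead you forbid edges from ever returning to $F$, then $F$ only shrinks, but then the explored region $R$ carries at most $k$ forests plus a few edges of $F$, and the density count cannot reach $k+\tfrac{d}{k+d+1}$. Designing exchange rules and a potential that are simultaneously (i) zero exactly at the target decompositions, (ii) strictly decreasable whenever positive, and (iii) compatible with a terminal density count landing on $k+\tfrac{d}{k+d+1}$ is the unresolved core of the problem; your proposal names that step but does not carry it out.

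A secondary issue: the extremal acyclic bipartite examples in this paper are built for the \emph{digraph} conjecture (Conjecture~\ref{directed-NDT-conj}), not for Conjecture~\ref{strong-NDT-conj}, and your closing remark that the scheme ports to digraphs with only in-degree bookkeeping overlooks the paper's own obstruction. The family $D_n$ constructed in the introduction has $\gamma(D_n)=1$ and $\Delta^-(D_n)=k+1$, yet every decomposition into $k+1$ branchings forces each branching to contain a directed path of length $n$; so a branching analogue of the Strong NDT statement is not merely hard but \emph{false} as naively formulated, and no amount of augmenting-chain bookkeeping will rescue it.
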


In \cite{Mon-Men-Ras-Zhu-12},  Montassier et al. proved the fractional arboricity bound is sharp and the NDT Conjecture holds when $(k,d)=(1,1)$ or $(1,2)$. Many partial results \cite{Chen-Kim-Ko-West-Zhu-17,Kim-Ko-West-Wu-Zhu-13,Yang-18}  on the NDT Conjecture came until Jiang and Yang \cite{Jiang-Yang-17} proved it. The strong NDT Conjecture is only proved for cases  $(k,d)=(1,2)$ \cite{Kim-Ko-West-Wu-Zhu-13} and $d=1$ \cite{Yang-18}. 

\begin{thm}[The NDT Theorem, \cite{Jiang-Yang-17}]
	The NDT Conjecture is true. 
\end{thm}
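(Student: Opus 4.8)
\medskip\noindent{\bf Proof proposal.}
The plan is to argue by contradiction from a minimal counterexample, combined with a potential-minimizing decomposition into $k+1$ forests and an edge-exchange (augmenting-chain) argument. First rewrite the hypothesis: since $k+\frac{d}{k+d+1}=\frac{(k+1)(k+d)}{k+d+1}$, the bound $\gamma(G)\le k+\frac{d}{k+d+1}$ says exactly that
\[
(k+d+1)\,|E(H)|\le (k+1)(k+d)\,(|V(H)|-1)\qquad\text{for every }H\subseteq G\text{ with }|V(H)|>1 .
\]
Suppose the theorem fails, and take a counterexample $G$ minimizing $|V(G)|$ and then $|E(G)|$. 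Since $\frac{d}{k+d+1}<1$ we have $\gamma(G)\le k+1$, so by Nash-Williams' Theorem (Theorem~\ref{nash-williams}) $G$ decomposes into $k+1$ forests. A routine reducibility step --- delete a low-degree vertex or a suitable edge and extend a decomposition of the smaller graph --- lets us assume $G$ is connected and has no trivially removable vertex.

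Among all decompositions $(F_1,\dots,F_k,F_{k+1})$ of $G$ into $k+1$ forests, fix one minimizing a carefully chosen potential $\Phi$, designed so that the set $B$ of \emph{bad} vertices (those $v$ with $d_{F_{k+1}}(v)>d$), suitably weighted and refined by secondary quantities such as the number of components of $F_1\cup\dots\cup F_k$ meeting a fixed bad vertex, is as small as possible. Since $G$ is a counterexample, $B\neq\varnothing$; pick $r\in B$ and root the exploration there. Call an edge of some $F_i$ with $i\le k$ \emph{swappable up} if moving it into $F_{k+1}$ keeps $F_{k+1}$ acyclic, and an edge of $F_{k+1}$ \emph{swappable down into $F_i$} if moving it there keeps $F_i$ acyclic; a \emph{legal chain} from $r$ is an alternating sequence of such moves. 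Let $H$ be the subgraph spanned by all vertices and edges reachable from $r$ along legal chains.

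Minimality of $\Phi$ then forces a rigid structure on $H$: no legal chain can decrease $d_{F_{k+1}}(r)$ or repair another bad vertex, and every such obstruction translates into density inside $H$ --- the forests $F_i\cap H$ with $i\le k$ are forced to span $V(H)$ with very few components, while $F_{k+1}\cap H$ must carry the excess degree at $r$ and at the bad vertices trapped in $H$. Writing $|E(F_i\cap H)|=|V(H)|-c_i$ for $i\le k$ and bounding $|E(F_{k+1}\cap H)|$ from below (at least $d+1$ edges at $r$, plus contributions from $B\cap V(H)$), summing over $i=1,\dots,k+1$ and comparing with $\frac{(k+1)(k+d)}{k+d+1}(|V(H)|-1)$ yields $|E(H)|>\frac{(k+1)(k+d)}{k+d+1}(|V(H)|-1)$, contradicting the density reformulation above.

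The hard part is establishing this final inequality uniformly in $(k,d)$: earlier partial results only pushed the edge count through under restrictions relating $d$ and $k$. The resolution is a discharging-style refinement --- assign tokens to the dense pockets of $H$ and route them along the legal chains toward the bad vertices, so that the total degree deficit on $B\cap V(H)$ is paid for by a genuine violation of the fractional arboricity bound; proving that this token routing never over-charges any vertex, for all $k$ and $d$, is the technical core of the argument.
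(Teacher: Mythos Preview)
The paper does not contain a proof of this theorem: it is quoted from \cite{Jiang-Yang-17} as background, with no argument given. So there is no ``paper's own proof'' to compare against; the present paper takes the NDT Theorem as a known input and moves on to its digraph analogues.

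As for your sketch on its own merits: it is a reasonable summary of the \emph{shape} of the Jiang--Yang argument (minimal counterexample, a potential-minimizing decomposition into $k+1$ forests, an exploration/augmenting structure rooted at a bad vertex, and a final density contradiction), but it is not a proof. Two concrete gaps:
\begin{itemize}
\item The potential $\Phi$ is never specified. In the actual proof the choice of potential is delicate --- it is not simply ``number of bad vertices plus tie-breakers'' --- and the entire structure of the exploration depends on exactly which quantities are lexicographically minimized. Without pinning $\Phi$ down, the claim that ``minimality of $\Phi$ forces a rigid structure on $H$'' has no content.
\item The last paragraph openly defers the whole argument: you say the final density inequality is ``the hard part'' and that the discharging/token-routing ``is the technical core,'' but you do not carry it out. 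That step is precisely where the earlier partial results (for $k\le 2$, for $d\le k+1$, etc.) stalled, and it is where Jiang--Yang's contribution lies. Asserting that tokens can be routed without over-charging is the theorem, not a step toward it.
\end{itemize}
In short: correct direction, but what you have written is an outline of where a proof would go, not a proof.
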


This paper will study some directed versions of Conjectures~\ref{NDT-conj} and \ref{strong-NDT-conj}. We shall use the concepts of `arborescence' and `branching', which are well studied in digraphs, and are the directed versions of `tree' and `forest'.

Let $D=(V,A)$ be a digraph. 
A subdigraph of $D$ is  spanning if its vertex set is $V$.  A subdigraph $F$ (it may not be spanning) of $D$ is an {\em $r$-arborescence} if its underlying graph is a tree, $r \in V(D)$ and for any $u \in V(F)$, there is exactly one directed path in $F$ from $r$ to $u$. We say that the vertex $r$ is the \emph{root of the  arborescence $F$}.

A \emph{branching} $B$ in $D$ is a spanning subdigraph each component of which is an arborescence, and the \emph{root set} $R(B)$ of $B$  consists of all roots of its components. Frank \cite{Frank-79} characterized a digraph which decomposes into $k$ branchings.

For $v \in V(D)$, let $\partial_{D}^{-}(v)$ denote   the set of arcs in $D$ with $v$ as their head, $\partial_{D}^{+}(v)$ denote the set of arcs in $D$ with $v$ as their tail, $d_{D}^{-}(v):=|\partial_{D}^{-}(v)|$ and $d_{D}^{+}(v):=|\partial_{D}^{+}(v)|$.  
$\Delta^{-}(D) := \max_{v \in V} d_{D}^{-}(v)$, and $\Delta^{+}(D) := \max_{v \in V} d_{D}^{+}(v)$.

\begin{thm}[\cite{Frank-79}]\label{directed-nash-williams-theorem}
	A digraph $D$ decomposes into $k$ branchings if and only if $\Delta^{-}(D) \leq k$ and $\gamma(D) \leq k$.
\end{thm}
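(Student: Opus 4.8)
The plan is to prove the two directions separately; necessity is immediate, and sufficiency will be reduced to a classical packing result for arborescences. For \textbf{necessity}, suppose $D=B_1\cup\cdots\cup B_k$ with the $B_i$ arc-disjoint branchings. In any arborescence the root has no entering arc and every other vertex has exactly one, so each $B_i$ has in-degree at most $1$ at every vertex; summing over $i$ gives $d^-_D(v)\le k$ for all $v$, i.e. $\Delta^-(D)\le k$. Moreover the underlying graph of each $B_i$ is a disjoint union of trees, hence a forest, so the underlying graph of $D$ is a union of $k$ forests, and Theorem~\ref{nash-williams} yields $\gamma(D)\le k$.

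For \textbf{sufficiency}, assume $\Delta^-(D)\le k$ and $\gamma(D)\le k$. I would form an auxiliary digraph $D^+$ by adding one new vertex $r_0$ and, for each $v\in V(D)$, exactly $k-d^-_D(v)\ge 0$ parallel arcs from $r_0$ to $v$ (this nonnegativity is precisely where $\Delta^-(D)\le k$ enters). Then $d^-_{D^+}(v)=k$ for every $v\neq r_0$, so $|A(D^+)|=|A(D)|+\sum_{v}(k-d^-_D(v))=k\,|V(D)|$, which is exactly $k$ times the number of arcs in a spanning arborescence of $D^+$. Hence $k$ arc-disjoint spanning arborescences of $D^+$ rooted at $r_0$ would automatically partition $A(D^+)$; deleting the $r_0$-arcs from each of them leaves $k$ branchings of $D$ (a vertex $v$ is a root of the $i$-th branching iff the $i$-th arborescence uses an $r_0v$ arc) that partition $A(D)$. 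Conversely any decomposition of $D$ into $k$ branchings extends to such a packing, since a vertex $v$ is a root in exactly $k-d^-_D(v)$ of the branchings and we added exactly that many $r_0v$ arcs. So it suffices to find $k$ arc-disjoint spanning $r_0$-arborescences in $D^+$.

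By Edmonds' classical arc-disjoint arborescences theorem, $D^+$ contains $k$ arc-disjoint spanning arborescences rooted at $r_0$ if and only if $d^-_{D^+}(X)\ge k$ for every nonempty $X\subseteq V(D)$. A direct count gives $d^-_{D^+}(X)=k\,|X|-|A(D[X])|$: the arcs entering $X$ coming from $r_0$ number $\sum_{v\in X}(k-d^-_D(v))=k\,|X|-\sum_{v\in X}d^-_D(v)$, and the arcs entering $X$ from $V(D)\setminus X$ number $\sum_{v\in X}d^-_D(v)-|A(D[X])|$. Thus the cut condition $d^-_{D^+}(X)\ge k$ is equivalent to $|A(D[X])|\le k(|X|-1)$, which is trivial when $|X|=1$ (no loops) and follows from $\gamma(D)\le k$ applied to the subdigraph $D[X]$ when $|X|\ge 2$. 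This verifies Edmonds' condition and finishes the proof.

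The real content is the reduction in the second paragraph: padding every in-degree up to exactly $k$ makes $|A(D^+)|$ match $k$ times an arborescence, turning a covering (decomposition) question into a packing question, after which Edmonds' cut condition translates verbatim into the fractional arboricity bound. I expect no genuine obstacle beyond the bookkeeping of $r_0$-arc multiplicities on both sides of the equivalence and the degenerate singleton sets. An alternative route, closer to Frank's original viewpoint, avoids Edmonds' theorem: one checks that the arc sets of branchings of $D$ are the independent sets of a matroid whose rank of $A'\subseteq A$ equals $|V(A')|$ minus the number of source strong components of $(V,A')$, and then the Nash-Williams--Edmonds matroid covering theorem reduces the claim to $|A'|\le k\big(|V(A')|-s(A')\big)$; this in turn follows by summing, over the source strong components $S_i$, the inequality $\sum_{v\in S_i}\big(k-d^-_{A'}(v)\big)\ge k$, which holds because all in-arcs at a source component stay inside it and $\gamma(D)\le k$ bounds the arcs inside $S_i$ by $k(|S_i|-1)$.
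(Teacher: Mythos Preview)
The paper does not give its own proof of this theorem: it is quoted from Frank~\cite{Frank-79} as a known tool and then invoked in the proof of Theorem~\ref{main-branching-dec}. So there is nothing to compare against directly.

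Your argument is correct and is in fact the standard modern derivation. Necessity is immediate as you say. For sufficiency, the ``padding'' construction---adding a super-root $r_0$ with $k-d^-_D(v)$ parallel $r_0v$ arcs so that every original vertex acquires in-degree exactly $k$---cleanly turns the decomposition problem into a packing problem, and your arc count $|A(D^+)|=k|V(D)|$ guarantees that $k$ arc-disjoint spanning $r_0$-arborescences must in fact partition $A(D^+)$. The translation of Edmonds' cut condition $d^-_{D^+}(X)\ge k$ into $|A(D[X])|\le k(|X|-1)$ via the identity $d^-_{D^+}(X)=k|X|-|A(D[X])|$ is exactly right, and $\gamma(D)\le k$ supplies that bound for $|X|\ge 2$ while the loopless hypothesis handles $|X|=1$. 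The back-and-forth between packings in $D^+$ and decompositions of $D$ is also fine: a vertex is a root in the $i$-th branching precisely when its in-arc in the $i$-th arborescence came from $r_0$, and the multiplicities match.

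Your alternative sketch via the branching matroid and the matroid covering theorem is also valid and, as you note, closer in spirit to Frank's original argument; the rank formula and the inequality $\sum_{v\in S_i}(k-d^-_{A'}(v))\ge k$ for each source strong component are the right ingredients. Either route would be acceptable as a self-contained proof, though the paper is content to cite the result.
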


Analogous to the NDT Conjecture \ref{NDT-conj}, we propose the following conjecture as its corresponding directed version. 

\begin{conjecture} \label{directed-NDT-conj}
	For positive integers $k$ and $d$, if $D$ is a digraph with 
	$\gamma(D) \leq  k + \frac{d-k}{d+1}$
	and $\Delta^{-}(D) \leq k+1$, then $D$ decomposes into $k + 1$ branchings  $B_{1}, \ldots, B_{k}, B_{k+1}$ with $\Delta^{+}(B_{k+1}) \leq d$.
\end{conjecture}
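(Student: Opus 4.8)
\medskip
\noindent\textbf{Proof strategy.} The full conjecture appears to be beyond current reach, so the plan is to settle the regime $d \le k$, where the hypothesis $\gamma(D) \le k + \frac{d-k}{d+1}$ reduces to $\gamma(D) \le k$ since $\frac{d-k}{d+1} \le 0$. The argument relies on the elementary observation that a spanning subdigraph $B$ with $\Delta^{-}(B) \le 1$ is a branching if and only if $B$ has no directed cycle: on any undirected cycle of $B$, the in-degrees along the cycle sum to the number of its arcs, so if all of them are at most $1$ they are all exactly $1$ and the cycle is directed. Hence ``being a branching'' splits into the local condition $\Delta^{-}\le 1$ and the global condition ``no directed cycle,'' and the whole difficulty is in juggling these two.

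\medskip
\noindent Suppose the statement fails and let $D$ be a counterexample with $|A(D)|$ minimum and, subject to that, $|V(D)|$ minimum. Because $\gamma(D) \le k \le k+1$ and $\Delta^{-}(D) \le k+1$, Frank's Theorem~\ref{directed-nash-williams-theorem} gives a decomposition of $D$ into $k+1$ branchings. Among all decompositions $D = B_1 \cup \cdots \cup B_{k+1}$ into branchings, fix one minimizing the potential
$$\Phi \;=\; \sum_{v \in V(D)} \max\bigl\{\, 0,\; d^{+}_{B_{k+1}}(v) - d \,\bigr\},$$
and, subject to that, minimizing a suitable secondary quantity (e.g. $|A(B_{k+1})|$). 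If $\Phi = 0$ we are done, so some vertex $v_0$ has $d^{+}_{B_{k+1}}(v_0) \ge d+1$.

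\medskip
\noindent\textbf{Augmenting moves.} For an arc $a = (x,y) \in A(B_{k+1})$ and an index $i \in \{1,\dots,k\}$, call ``moving $a$ into $B_i$'' \emph{legal} if $y$ has no in-arc in $B_i$ and $B_i$ contains no directed path from $y$ to $x$, so that $B_i + a$ is still a branching. A single legal move on an arc leaving $v_0$ would strictly decrease $\Phi$, contradicting minimality; hence no such move is legal. Following the alternating-path paradigm used for the undirected Nash--Williams and Nine Dragon Tree results, I would then grow a set $W \subseteq V(D)$ of vertices reachable from $v_0$ through chains of moves, each legal relative to the moves already performed, recording for every newly reached vertex both the index it was reached through and the arc used to reach it. Whenever such a chain can be closed up into a genuinely improving sequence of relocations one obtains a contradiction with the minimality of $\Phi$ (or of the secondary quantity, for weight-neutral closures), so one may assume that $W$ admits no improving chain.

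\medskip
\noindent\textbf{The density count, and the obstacle.} The absence of improving chains forces rigid structure on $W$: a vertex of $W$ reached through index $i$ essentially must already use colour $i$ on its in-arc and must lie on the unique $B_i$-path leading to that in-arc's tail, which keeps the corresponding arcs inside $W$. Counting the arcs of $D[W]$ by their heads, each ``interior'' vertex of $W$ then contributes about $k+1$ in-arcs, while the surplus of at least $d+1$ out-arcs carried by $v_0$ offsets the few ``boundary'' vertices whose in-arcs leak out of $W$; one arrives at $|A(D[W])| > k\bigl(|W|-1\bigr)$, contradicting $\gamma(D) \le k$. This last step is the crux, and it is exactly where $d \le k$ enters: the surplus at $v_0$ only pays for the boundary losses when the target density is the integer $k$. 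Running the same count against the sharper bound $k + \frac{d-k}{d+1}$ needed for $d > k$ --- while still controlling the directed-cycle obstruction, which simply vanishes for pseudo-branchings and makes the $\mad$ statement comparatively routine --- is the main obstacle, and seems to call for a considerably finer choice of potential and of reachable region, if not a new idea altogether.
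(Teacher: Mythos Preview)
Your strategy for the regime $d\le k$ is genuinely different from the paper's, and as written it has a real gap.

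\medskip
\textbf{Where the gap is.} You set up the potential $\Phi$ and propose to grow a reachable set $W$ via augmenting moves, aiming for $|A(D[W])|>k(|W|-1)$. The trouble is precisely the directed-cycle obstruction you flag but do not neutralise. When a move of $a=(v_0,y)$ into $B_i$ is blocked, it may be because $d^{-}_{B_i}(y)=1$ \emph{or} because $B_i$ contains a $y\to v_0$ path. In the pseudo-branching setting only the first alternative occurs, and that is what pins every in-arc of a reached vertex inside the explored region and drives the density count (this is exactly the mechanism in the paper's proof of Theorem~\ref{main-pseudo-branching-dec}). For branchings the second alternative produces vertices $y$ with $d^{-}_{B_i}(y)=0$, so the in-arcs of $y$ need not lie in $W$ and the sentence ``each interior vertex of $W$ contributes about $k+1$ in-arcs'' is not justified. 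Your remark that such a $y$ ``must lie on the unique $B_i$-path leading to that in-arc's tail'' is a description of the obstruction, not a way to recover the lost arcs for the count; turning a $y\to v_0$ path in $B_i$ into arcs of $D[W]$ would require enlarging $W$ along that path, and then you must argue that the new vertices again have their in-arcs trapped, which runs into the same cycle issue recursively. As stated, the density step does not close.

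\medskip
\textbf{What the paper does instead.} The paper bypasses the augmenting-path machinery entirely. It first \emph{constructs} $B_{k+1}$ directly and only then invokes Frank's theorem. Set $T=\{v:d^{-}_{D}(v)=k+1\}$ and $S=V\setminus T$, and put $f\equiv d$. For every nonempty $X\subseteq T$, the induced subdigraph on $X\cup N^{-}_{D}(X)$ has at least $(k+1)|X|$ arcs, so $\gamma(D)\le \frac{d(k+1)}{d+1}$ forces $d\,|N^{-}_{D}(X)|\ge |X|$; this is a Hall-type inequality $\widetilde{f}(N^{-}_{D}(X))\ge |X|$. A short submodular/uncrossing argument (the paper's Proposition~\ref{B_{k+1}-exist}) then produces a branching $B_{k+1}$ with $d^{-}_{B_{k+1}}(v)=1$ for all $v\in T$ and $d^{+}_{B_{k+1}}(v)\le d$ everywhere. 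Removing $A(B_{k+1})$ leaves a digraph with $\Delta^{-}\le k$ and $\gamma\le k$, which Frank's theorem decomposes into $k$ branchings. The cycle constraint never has to be negotiated during an exchange argument because $B_{k+1}$ is built in one shot; the price is that this route gives no leverage for $d>k$, where $\gamma(D)$ may exceed $k$ and the final appeal to Frank fails.

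\medskip
In short: your extremal/alternating scheme is the natural analogue of the pseudo-branching proof and is worth pursuing, but to make it a proof you must either show how the $y\to v_0$ paths can be absorbed into $W$ without losing the in-degree count, or change the potential so that cycle-blocked moves still yield usable structure. The paper's Hall-condition shortcut is the missing idea for $d\le k$.
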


In Section $2$, we shall prove  Conjecture \ref{directed-NDT-conj} for cases $d \leq k$.  Note that in Conjecture \ref{directed-NDT-conj}, the condition of  $\Delta^{-}(D) \leq k+1$ is necessary: if $D$ decomposes into $k + 1$ branchings  $B_{1}, \ldots, B_{k+1}$, then $\Delta^{-}(B_{i}) \leq 1$ for $1 \leq i \leq k+1$, thus $\Delta^{-}(D) \leq k+1$.   
In Section \ref{Section-lower-B}, by constructing a series of acyclic bipartite  digraphs, we show the following theorem, which says 
that the bound of  $ \gamma(D)$ given in Conjecture~\ref{directed-NDT-conj} is best possible.  

\begin{thm} \label{fra-arb-bound-tight}
	For positive integers $k, d$ and any $\epsilon > 0$, there exists an acyclic bipartite digraph $D$ such that $ \gamma(D) <  k + \frac{d-k}{d+1} + \epsilon$ and $\Delta^{-}(D) \leq k+1$, but $D$ can not be decomposed into $k + 1$ branchings $B_{1}, \ldots, B_{k}, B_{k+1}$ with $\Delta^{+}(B_{k+1}) \leq d$.
\end{thm}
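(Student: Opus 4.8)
The plan is to construct, for each target $\epsilon>0$, an explicit acyclic bipartite digraph $D$ that is tight for the conjectured bound. The natural model is to take a bipartite digraph with all arcs oriented from one side (the "sources") to the other (the "sinks"), so acyclicity is automatic, $\Delta^-(D)$ is controlled purely by the in-degree of sink vertices, and the fractional arboricity reduces to a counting problem on the underlying bipartite graph. Concretely I would fix a large integer $n$ and build $D$ from $n$ sinks, each of in-degree exactly $k+1$, together with enough sources so that the underlying graph is dense enough to force $\gamma(D)$ close to $k+\frac{d-k}{d+1}$ but — crucially — the rigidity of the branching structure prevents the desired decomposition. To do this one typically nests or iterates a small gadget: a "core" sink receives $k+1$ arcs, of which $k$ must be absorbed by $B_1,\dots,B_k$ and the last by $B_{k+1}$, and then one arranges $d+1$ (or a similar number of) sources feeding into a cluster of such sinks so that $B_{k+1}$ is compelled to put out-degree $d+1$ at some source. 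Iterating this construction over several "levels," with the number of sinks growing geometrically, drives $\gamma(D)$ to the limiting value from above while keeping the obstruction intact.

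The key steps, in order, are: (1) Describe the gadget precisely — name the source and sink classes, the arc multiplicities, and the level structure — and verify it is acyclic bipartite with $\Delta^-(D)\le k+1$ (indeed every sink has in-degree exactly $k+1$). (2) Compute $\gamma(D)$: since the whole digraph $D$ is the densest relevant subdigraph (by construction), $\gamma(D)=\frac{|A(D)|}{|V(D)|-1}$, and a direct count of arcs and vertices, as a function of the level parameter $n$, shows this quantity is $k+\frac{d-k}{d+1}+o(1)$ as $n\to\infty$; choosing $n$ large makes it $<k+\frac{d-k}{d+1}+\epsilon$. (3) Prove the impossibility: assume for contradiction that $D=B_1\cup\dots\cup B_{k+1}$ with $\Delta^+(B_{k+1})\le d$, and derive a contradiction by a counting/pigeonhole argument. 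The engine here is that each sink $v$ has $d^-_D(v)=k+1$ but each branching contributes at most one arc into $v$, so every branching $B_i$ uses exactly one in-arc at every sink; in particular $B_{k+1}$ spans into every sink, so its arcs form a large sub-branching, and then the bound $\Delta^+(B_{k+1})\le d$ limits how many sinks a single source can cover through $B_{k+1}$. Comparing the number of sinks that must be covered with the number of source-slots available (at most $d$ per source in $B_{k+1}$, fewer once the acyclic layering is taken into account) forces $|A(B_{k+1})|$ to exceed what is allowed, or forces some source to have out-degree $\ge d+1$ in $B_{k+1}$.

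The main obstacle I expect is calibrating the gadget so that the two requirements pull in opposite directions only at the threshold: the arc/vertex ratio must be squeezed toward $k+\frac{d-k}{d+1}$ (this is a delicate equality, so the multiplicities and the number of sources per cluster must be chosen so the count $\frac{|A|}{|V|-1}$ has exactly this limit), while simultaneously the out-degree obstruction for $B_{k+1}$ must survive. A single-level construction will generally overshoot or undershoot the ratio, so the real work is in the iterated/telescoping version — making the level sizes a geometric sequence whose sum telescopes to give the exact limiting ratio, and checking that the $B_{k+1}$-obstruction propagates through all levels simultaneously rather than being dodgeable level-by-level. Verifying that no clever global reallocation of arcs among $B_1,\dots,B_{k+1}$ escapes the bound — i.e. that the local obstruction at each cluster genuinely aggregates — is the heart of the argument, and is where a careful double-counting of (source, sink) incidences against branching capacities will be needed.
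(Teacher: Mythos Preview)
Your plan has the right skeleton but misses the key idea, and as written the counting step cannot close.

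Consider your own engine: with $S$ sources and $T$ sinks, each sink of in-degree exactly $k+1$, you correctly observe that every branching picks up exactly one in-arc at every sink, so $|A(B_{k+1})|=T$; then $\Delta^+(B_{k+1})\le d$ forces $T\le dS$. But to get $\gamma(D)$ close to $k+\tfrac{d-k}{d+1}=\tfrac{d(k+1)}{d+1}$ you need the arc/vertex ratio $\tfrac{(k+1)T}{S+T-1}$ near this value, which happens precisely when $T\approx dS$. At $T=dS$ the pigeonhole is \emph{tight}, not violated, so there is no contradiction; pushing $T>dS$ to force one pushes the density strictly above the target. The ``iterated/telescoping'' levels you propose do not escape this: each level faces the same trade-off, and the global obstruction does not aggregate the way you hope.

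What the paper does instead is exploit \emph{rigidity} plus a \emph{gluing} trick. It builds a single-level bipartite digraph $D_0$ with $|U|=n$ sources and $dn$ sinks (so $T=dS$ exactly), arranged in a circulant pattern: $u_{i+j}\to W_i$ for $0\le j\le k$, indices mod $n$. The tight count $T=dS$ now means that in \emph{any} valid decomposition of $D_0$, every source $u\in U$ is forced to satisfy $d^+_{B_{k+1}}(u)=d$ (Claim~4.1). Then take two disjoint copies of $D_0$ and identify the two copies of $u_0$ into a single vertex $u_0^*$. The glued digraph $D$ still has the right density (the identification removes one vertex without losing arcs), but now $u_0^*$ is forced to have $d^+_{B_{k+1}}(u_0^*)=2d>d$, which is the contradiction. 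This gluing is the idea your plan lacks.

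Finally, your assertion that ``the whole digraph is the densest relevant subdigraph (by construction)'' is exactly the hard part, not a triviality. In the paper this occupies Claims~4.2--4.4: one must rule out that some proper $V^*\subsetneq V(D_0)$ has higher density, and this uses the specific circulant structure (matchings $M_j=\{u_{i+j}i\}$ and a parity argument) in an essential way. A generic bipartite gadget will not have this property, so you cannot defer it.
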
  

It is temping to make some analogous conjecture in digraphs corresponding to the  Strong NDT Conjecture \ref{strong-NDT-conj}.   
Unfortunately, we construct an example next which shows some naively analogous   conjecture is not true. 


Construct a sequence of digraphs whose underlying graphs are trees, $D_{0} \subseteq D_{1} \subseteq \ldots \subseteq D_{n}$:  
$(i)$ $D_{0}$ is a single vertex; 
$(ii)$ for each $i \in \{0, 1, \ldots, n-1\}$, construct $D_{i+1}$ from $D_{i}$ by adding  $(k+1)^{i+1}$ new vertices and $(k+1)^{i+1}$ new arcs such that for  already existing vertices $v \in V(D_{i})$, $d_{D_{i+1}}^{-}(v)=k+1$. 
As an example, Figure~\ref{f1} shows the sequence of digraphs  $D_i$, $i \in \{0, 1, 2, 3\}$, of $k=1$ and $n=3$. 
\begin{figure}[hptb]
	\centering
	\includegraphics[width=15cm]{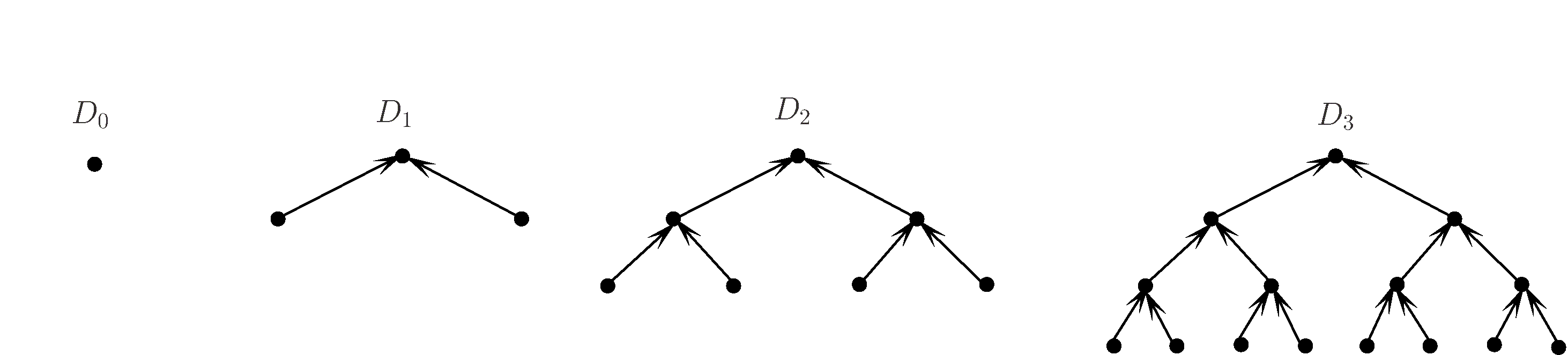}\\
	\caption{$k=1,n=3$} \label{f1}
\end{figure} 


Note that in $D_n$, for any $v \in V(D_{n-1})$, $d_{D_{n}}^{-}(v)=k+1$; for any $ u \in V(D_{n}) \setminus  V(D_{n-1})$, $d_{D_{n}}^{-}(u)=0$, and $u$ is an in-neighbor of some vertex in  $V(D_{n-1})  \setminus  V(D_{n-2})$. This leads to the fact that, up to isomorphism, $D_{n}$ has only one decomposition into $k+1$ branchings, each containing a directed path of length $n$.  

Since $\Delta^{-}(D_{n})=k+1$ and $\gamma(D_{n})=1$,   
this example shows that for $\Delta^{-}(D) \leq k+1$, the fractional arboricity  $1 + \epsilon$ (where $\epsilon$ is any small positive value)  
can not guarantee that a digraph decomposes into $k+1$ branchings with one consisting of arborescences of bounded sizes. So it seems that it is still a puzzle to solve even for a conjecture along this direction.   

A slightly different sparseness condition places a bound on the average vertex degree in all subgraphs. The \emph{maximum average degree of a graph $G$}, denoted by $\mad(G)$, is defined as
$$ \mad(G):= \max_{H \subseteq G}\frac{2|E(H)|}{|V(H)|}.$$
For a digraph $D$, let $\mad(D):= \mad(G[D])$, where $G[D]$ is the underlying graph of $D$.

A graph is a \emph{pseudo-forest} if there exists an orientation $\sigma$ of $G$ such that $\Delta^{-}(G^{\sigma}) \leq 1$,  
 where $G^{\sigma}$ is the oriented graph of $G$ under $\sigma$. 
 Hakimi \cite{Ha-65} characterized a graph which decomposes into $k$ pseudo-forests:
\begin{thm} [\cite{Ha-65}] \label{Hakimi} 
A graph $G$ decomposes into $k$ pseudo-forests if and only if  $\mad(G)$ $\leq$ $ 2k$.
\end{thm}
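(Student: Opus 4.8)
The plan is to prove both directions of the equivalence, the easy one first. For necessity, suppose $G = G_1 \cup \cdots \cup G_k$ is a decomposition into pseudo-forests. Each $G_i$ admits an orientation with all in-degrees at most $1$; restricting that orientation to the vertex set of an arbitrary subgraph $H$ of $G$ shows $|E(G_i[V(H)])| \le |V(H)|$. Since the sets $E(H) \cap E(G_i)$ partition $E(H)$ and each is contained in $E(G_i[V(H)])$, we get $|E(H)| \le k\,|V(H)|$, hence $\mad(G) \le 2k$.

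For sufficiency I would split the argument into an orientation step and a colouring step. \textbf{Orientation step.} Show that $\mad(G) \le 2k$ forces an orientation $\sigma$ with $\Delta^-(G^\sigma) \le k$. Among the finitely many orientations of $G$, choose one minimizing $\Phi := \sum_{v \in V(G)} \binom{d^-_{G^\sigma}(v)}{2}$. Suppose some vertex $u$ has $d^-_{G^\sigma}(u) \ge k+1$, and let $S$ be the set of vertices from which $u$ is reachable by a directed path in $G^\sigma$ (so $u \in S$). If some $w \in S$ had $d^-_{G^\sigma}(w) \le k-1$, reversing a directed $w$-to-$u$ path would keep every in-degree fixed except $d^-(u)$, which drops by one, and $d^-(w)$, which rises by one; a short computation then shows $\Phi$ strictly decreases, contradicting minimality. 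Hence every vertex of $S$ has in-degree at least $k$. Moreover every arc whose head lies in $S$ has its tail in $S$ as well, so $|E(G[S])| \ge \sum_{v \in S} d^-_{G^\sigma}(v) \ge k\,|S| + 1$, giving $2|E(G[S])|/|S| > 2k$ and contradicting $\mad(G) \le 2k$. \textbf{Colouring step.} Given an orientation with $\Delta^-(G^\sigma) \le k$, at each vertex there are at most $k$ incoming arcs; colour the incoming arcs at every vertex with distinct colours from $\{1, \ldots, k\}$. In colour class $i$ every vertex has in-degree at most $1$, so its underlying graph is a pseudo-forest, and the $k$ colour classes decompose $G$.

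The only real obstacle is the orientation step; the rest is bookkeeping. The delicate points there are the choice of monovariant — $\sum_v \binom{d^-(v)}{2}$ works because the path-reversal move then yields a genuine strict decrease, whereas counting only the number of ``bad'' vertices would not — and the observation that $S$ is closed under taking tails of incoming arcs, which is exactly what lets one read off the density of $G[S]$. Alternatively one could obtain the orientation step from a matroid-union or Hall/flow argument, but I would prefer the direct exchange argument above, since it is self-contained and the same reachability idea will reappear when the paper later handles pseudo-branchings under a $\mad$ hypothesis.
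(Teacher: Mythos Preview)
Your proof is correct, but note that the paper does not actually prove this statement: Theorem~\ref{Hakimi} is quoted from Hakimi~\cite{Ha-65} and used as a black box (it is invoked once, at the start of the proof of Theorem~\ref{main-pseudo-branching-dec}, to get an initial decomposition into $k+1$ pseudo-branchings). So there is no ``paper's own proof'' to compare against.

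That said, your argument is sound in both directions. The monovariant $\Phi=\sum_v\binom{d^-(v)}{2}$ does the job: reversing a directed $w$--$u$ path changes $\Phi$ by $d^-(w)-(d^-(u)-1)<0$ whenever $d^-(w)\le k-1<k+1\le d^-(u)$, and intermediate vertices are unaffected. The closure of $S$ under tails of incoming arcs then gives $|E(G[S])|=\sum_{v\in S}d^-_{G^\sigma}(v)\ge k|S|+1$, contradicting $\mad(G)\le 2k$. The colouring step is immediate from the paper's definition of pseudo-forest. Your closing remark is also apt: the reachability/exchange mechanism you use here is exactly the flavour of the alternating-trail argument the paper deploys in Section~3 for Theorem~\ref{main-pseudo-branching-dec}, so supplying this proof rather than citing it would make the paper more self-contained without adding new machinery.
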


Fan, Li, Song and Yang \cite{Fan-Li-Song-Yang-15} proved the pseudo-forest analogue of the NDT Theorem. Later, Grout and Moore \cite{Gr-Mo-19} proved the pseudo-forest analogue of the Strong NDT Conjecture.

\begin{theorem}[\cite{Fan-Li-Song-Yang-15}]\label{pseudo-forest-NDT}
For non-negative integers $k$ and $d$, if $G$ is a graph with $ \frac{1}{2}\mad(G)$  $\leq$  $k+ \frac{d}{k+d+1}$, then $G$ decomposes into $k + 1$  pseudo-forests with one being $d$-bounded.
\end{theorem}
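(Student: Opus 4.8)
The plan is to translate Theorem~\ref{pseudo-forest-NDT} into a question about orientations, using Theorem~\ref{Hakimi} in its orientation form: a graph $H$ decomposes into $j$ pseudo-forests if and only if $H$ has an orientation in which every vertex has in-degree at most $j$ (equivalently $|E(H')|\le j|V(H')|$ for all $H'\subseteq H$). Set $c:=k+\tfrac{d}{k+d+1}$; the standing hypothesis says $|E(H)|\le c\,|V(H)|$ for every $H\subseteq G$. It then suffices to produce a subgraph $F\subseteq G$ which is a $d$-bounded pseudo-forest and for which $G-E(F)$ admits an orientation with maximum in-degree at most $k$. Since $c<k+1$, $G$ itself has an orientation $\sigma$ with $d^-_{\sigma}(v)\le k+1$ for all $v$; conversely, once such a $\sigma$ is fixed, finding $F$ amounts to choosing, for each vertex $w$ in the set $S:=\{v:d^-_{\sigma}(v)=k+1\}$, exactly one in-arc to place into $F$ --- the chosen arcs automatically have in-degree $\le 1$ everywhere (so $F$ is a pseudo-forest) and deleting them leaves all in-degrees $\le k$, so the only real constraint is that each vertex receive at most $d$ chosen arcs at its two ends, i.e.\ at most $d-1$ chosen out-arcs if it lies in $S$ and at most $d$ otherwise.

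Next I would pass to an extremal orientation. Among all orientations of $G$ with maximum in-degree $\le k+1$, pick one $\sigma$ minimizing $|S|$, then (as a tie-break) minimizing $\sum_v d^-_{\sigma}(v)^2$. Reversing a directed path from a vertex of in-degree $\le k-1$ to a vertex of $S$ would strictly lower this potential, so no such path exists; hence, writing $Z$ for the set of vertices from which a directed path reaches $S$, every vertex of $Z$ has in-degree $\ge k$ and $Z$ is closed under taking in-neighbours. Therefore every arc with head in $Z$ lies in $G[Z]$, so $|E(G[Z])|=\sum_{v\in Z}d^-_{\sigma}(v)\ge k|Z|+|S|$, and the density bound forces $|S|\le (c-k)|Z|=\tfrac{d}{k+d+1}|Z|$; the same computation applied to any in-neighbour-closed subset of $Z$ gives a local version of this inequality. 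It is precisely here that $c=k+\tfrac{d}{k+d+1}$ is the largest value for which the argument can run, matching a $\mad$/pseudo-forest analogue of the sharpness expressed in Theorem~\ref{fra-arb-bound-tight}.

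The last and hardest step is the selection itself: choose one in-arc of each $w\in S$ so that no vertex is the tail of more than $d-1$ selected arcs (or $d$, if it lies outside $S$); the selected arcs then form the desired $d$-bounded pseudo-forest $F$. A plain degree-constrained bipartite matching between $S$ and the in-neighbours of $S$ does not suffice --- a short computation shows the density hypothesis alone does not force Hall's condition for it --- so the demand of the vertices of $S$ must be routed more globally through $Z$: one exploits that the excess in-degree $\sum_{v\in Z}(d^-_{\sigma}(v)-k)=|E(G[Z])|-k|Z|$ is distributed over all of $Z$ (and that the extremality of $\sigma$ lets one slide $S$-membership along directed paths), sets up an auxiliary flow whose cut conditions are exactly the local inequalities from the previous paragraph, and invokes a Hall/Menger-type feasibility criterion. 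The crux of the whole proof is making this routing quantitatively tight against the bound $c$; I expect it to come down to a discharging argument in which an overloaded vertex that cannot be relieved produces a subgraph of density exceeding $k+\tfrac{d}{k+d+1}$, contradicting $\tfrac12\mad(G)\le c$. A safer, if more laborious, packaging of the same ideas is the standard minimal-counterexample route: take $G$ with fewest edges, extend a valid decomposition of $G-v$ at a vertex $v$ of degree $\le 2k+1$ by distributing its edges among the $k+1$ pseudo-forests, use augmenting exchanges among the pseudo-forests when a direct distribution fails, and discharge to reach a contradiction when stuck.
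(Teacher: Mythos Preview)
The paper does not prove Theorem~\ref{pseudo-forest-NDT}; it is quoted from \cite{Fan-Li-Song-Yang-15} as background, so there is no proof in the paper to compare your proposal against. The closest thing in the paper is the proof of Theorem~\ref{main-pseudo-branching-dec} (the digraph analogue), and that proof follows exactly the ``safer'' route you mention only in your final sentence: take a minimal counterexample, fix a decomposition into $k+1$ pseudo-branchings minimizing the residue $\lambda=\sum_v\max\{d^+_{C_{k+1}}(v)-d,0\}$, and push excess along alternating trails to obtain a subdigraph whose density exceeds the hypothesis. The original proof in \cite{Fan-Li-Song-Yang-15} is in the same spirit.

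Your primary approach, by contrast, is not a proof but a programme, and the hard step is left open. You correctly observe that after fixing an extremal orientation with $\Delta^-\le k+1$ the problem reduces to a degree-constrained selection of one in-arc per vertex of $S$, and you correctly diagnose that the na\"ive Hall condition $d\,|N^-(T)|\ge|T|$ is \emph{not} implied by $\tfrac12\mad(G)\le k+\tfrac{d}{k+d+1}$ alone (indeed, compare the paper's Theorem~\ref{main-branching-dec}, where the analogous Hall condition follows from the stronger bound $\gamma(D)\le k+\tfrac{d-k}{d+1}$, and only when $d\le k$). But from there you only \emph{speculate} that routing demand through the in-closure $Z$ and an auxiliary flow will succeed, saying you ``expect'' a discharging argument to close it. That is the whole content of the theorem; without it, nothing is proved. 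In particular, the extremality of $\sigma$ (no directed path from a vertex of in-degree $\le k-1$ into $S$) gives you local density inequalities on in-closed subsets of $Z$, but you have not shown these are the cut conditions of your putative flow, nor that they suffice for integrality of the selection. If you want to pursue this line, you must actually set up the flow network, identify its min-cut, and match it to a density inequality---and you should expect this to be at least as delicate as the alternating-trail exchange argument you dismissed as ``more laborious''.
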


\begin{theorem} [\cite{Gr-Mo-19}]\label{directed-strong-NDT}
For non-negative integers $k$ and $d$, if $G$ is a graph with $ \frac{1}{2}\mad(G)$  $\leq k+ \frac{d}{k+d+1}$, then $G$ decomposes into $k + 1$ pseudo-forests $F_{1}, \ldots, F_{k+1}$ such that each component of $F_{k+1} $ has at most $d$ edges.
\end{theorem}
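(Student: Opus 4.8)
The plan is to argue by contradiction from a minimal counterexample, starting from an arbitrary decomposition into $k+1$ pseudo-forests and improving it by local edge-exchanges until the obstruction to further improvement forces a subgraph denser than the hypothesis $\mad(G)\le 2k+\frac{2d}{k+d+1}$ permits. So suppose the statement fails, and let $G$ be a counterexample minimizing $|V(G)|+|E(G)|$. Since $\frac{d}{k+d+1}<1$ we have $\mad(G)<2(k+1)$, so Hakimi's Theorem (Theorem~\ref{Hakimi}) already yields a decomposition of $G$ into $k+1$ pseudo-forests. Fix a reference orientation of each pseudo-forest under which every vertex has in-degree at most $1$; this assigns a parent to each non-root vertex inside its component and isolates the at-most-one cyclic part of each component, information needed to track which edge-moves preserve the pseudo-forest property.

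Among all decompositions $(F_1,\dots,F_{k+1})$ of $G$ into $k+1$ pseudo-forests, choose one minimizing a potential $\Phi$ designed so that $\Phi$ attains its minimum exactly when every component of $F_{k+1}$ has at most $d$ edges, and so that each ``legal move'' below strictly decreases it. Here $\Phi$ is a lexicographically ordered tuple whose leading term counts the components $C$ of $F_{k+1}$ with $|E(C)|>d$ and whose lower-order terms record the sizes of such components and the distances, within each, from a distinguished ``excess edge'' to the rest of the component; the precise form for the weak analogue is due to Fan, Li, Song and Yang \cite{Fan-Li-Song-Yang-15}, with the refinement needed here due to Grout and Moore \cite{Gr-Mo-19}. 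A \emph{legal move} deletes an edge $e=uv$ from $F_{k+1}$ and inserts it into some $F_i$ with $i\le k$; this keeps $F_i$ a pseudo-forest exactly when the components of $F_i$ meeting $\{u,v\}$ carry total cyclomatic number at most $1$ after adding $e$ (so $e$ may always be absorbed inside an acyclic component of $F_i$, or across two components at most one of which is unicyclic). If necessary one simultaneously pushes an edge of $F_i$ back into $F_{k+1}$, the two displacements chosen so that a bad component of $F_{k+1}$ strictly shrinks.

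The heart of the matter is to show that $\Phi$-minimality forces a dense subgraph. Assume $F_{k+1}$ still has a bad component $B$ with $|E(B)|>d$; pick an excess edge $e_0\in E(B)$ realizing $\Phi$ and explore: call an edge of $F_1\cup\cdots\cup F_k$ \emph{reachable} if it lies on a chain of legal moves issuing from $e_0$, and let $R\subseteq V(G)$ be the closure of the reachable edges, their endpoints, and the bad components of $F_{k+1}$ they meet. Since no chain can be completed to a $\Phi$-decreasing move, each $F_i[R]$ for $i\le k$ must be ``saturated'' inside $R$ (any non-spanned acyclic component would admit an incoming move), while the bad components of $F_{k+1}$ inside $R$ are forced into a rooted ``tree of tight components'', each of size about $d$ and compelling its neighbours in the other forests to be full, with only a bounded number of roots. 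Counting the edges that $F_1,\dots,F_k$ and these tight components contribute to $G[R]$ and balancing against the edges allowed to leave $R$ gives $|E(G[R])|>(k+\frac{d}{k+d+1})|R|$, i.e.\ $\mad(G)>2k+\frac{2d}{k+d+1}$, a contradiction; hence no minimal counterexample exists.

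I expect the last paragraph to be the main obstacle, in two intertwined ways. First, the cyclic part of pseudo-forests makes ``shrinking a bad component'' genuinely weaker than in the forest case, since deleting an edge on the unique cycle of a component does not disconnect it; so $\Phi$ and the catalogue of legal moves must be engineered with care in order for a legal move to be \emph{guaranteed} to reduce $\Phi$ — this is the technical content carried over from \cite{Fan-Li-Song-Yang-15} and sharpened in \cite{Gr-Mo-19}. Second, recovering the \emph{sharp} constant $\frac{d}{k+d+1}$ (the bound is known to be best possible) requires pinning down the exact recursive structure of the saturated region $R$ — a tree of tight components of size roughly $d$ — and accounting precisely for its boundary, rather than settling for the cruder inequality a naive count would yield. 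By comparison the reduction to a minimal counterexample, the appeal to Hakimi's Theorem, and the routine checks on legal moves are straightforward.
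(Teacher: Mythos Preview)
The paper does not prove this theorem; it is quoted as a known result of Grout and Moore \cite{Gr-Mo-19} and is stated without proof in the introduction, serving only as background and motivation for the paper's own pseudo-branching analogue (Theorem~\ref{main-pseudo-branching-dec}). There is therefore no ``paper's own proof'' of this statement to compare your proposal against.

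That said, your outline is a reasonable high-level description of the Grout--Moore argument (minimal counterexample, carefully designed potential on decompositions, edge-exchange moves, and a density contradiction from the saturated region). But as you yourself acknowledge, the proposal is not a proof: the definition of $\Phi$, the precise catalogue of legal moves, and the counting that yields the sharp constant $\tfrac{d}{k+d+1}$ are all deferred to \cite{Fan-Li-Song-Yang-15} and \cite{Gr-Mo-19} rather than carried out. In particular, the sentence ``the precise form \dots\ is due to Fan, Li, Song and Yang, with the refinement needed here due to Grout and Moore'' is a citation, not an argument, and the final density count is asserted rather than performed. If the goal were to supply an independent proof, these are the pieces that would need to be written out in full; as it stands, what you have is a correct road map but not a self-contained proof.
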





A \emph{pseudo-branching} $C$ in a digraph $D$ is a spanning subdigraph with $\Delta^{-}(C) \leq 1$.

The same example above in the analysis for an analogue in digraphs corresponding to the  Strong NDT Conjecture \ref{strong-NDT-conj}  
says we need some more (unknown) ingredients to have an analogue in digraphs corresponding to Theorem \ref{directed-strong-NDT}.   
Here we give a digraph analogue to Theorem 1.9, stated below:

\begin{thm}\label{main-pseudo-branching-dec}
For positive integers $k$ and $d$, if $D$ is a digraph with $\frac{1}{2}\mad(D)$   $\leq$  $k + \frac{d-k}{d+1}$ 
and $\Delta^{-}(D) \leq k+1$, then $D$ decomposes into $k + 1$ pseudo-branchings $C_{1}, \ldots, C_{k}, C_{k+1}$ with $\Delta^{+}(C_{k+1}) \leq d$.
\end{thm}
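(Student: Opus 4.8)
The plan is to reduce Theorem~\ref{main-pseudo-branching-dec} to the undirected pseudo-forest result, Theorem~\ref{directed-strong-NDT} (Grout--Moore), by a splitting construction on the heads of arcs. Given a digraph $D$ with $\frac12\mad(D)\le k+\frac{d-k}{d+1}$ and $\Delta^-(D)\le k+1$, the key observation is that a pseudo-branching is exactly a spanning subdigraph in which every vertex has in-degree at most $1$; so decomposing $A(D)$ into $k+1$ pseudo-branchings is the same as \emph{orienting} the classes of an edge-partition so that in each class every vertex receives at most one arc. Since the arcs of $D$ already come with orientations, I want an edge-decomposition $A(D)=A_1\cup\dots\cup A_{k+1}$ such that in each $A_i$ no vertex is the head of two arcs, and additionally in $A_{k+1}$ no vertex is the tail of more than $d$ arcs.

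First I would pass to the underlying graph $G=G[D]$ and note $\frac12\mad(G)\le k+\frac{d-k}{d+1}=k+\frac{d}{k+d+1}\cdot\frac{k+d+1}{d+1}\cdot\frac{d-k}{d}$; more simply, since the NDT-type bound $k+\frac{d}{k+d+1}$ is at least $k+\frac{d-k}{d+1}$ is \emph{not} automatic, so instead I would invoke Theorem~\ref{directed-strong-NDT} only after checking the arithmetic relating the two thresholds — and in fact the cleaner route is to bound in-degrees directly. The main device: build an auxiliary bipartite-incidence structure or simply apply Theorem~\ref{Hakimi}/Theorem~\ref{directed-strong-NDT} to $G$ to get $k+1$ pseudo-forests $F_1,\dots,F_{k+1}$, one of them, say $F_{k+1}$, with each component having at most $d$ edges. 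Now each $F_i$ is a pseudo-forest, so it has an orientation with $\Delta^-\le 1$; but I must instead use the \emph{given} orientation from $D$. So the genuine step is: for each $F_i$, show that the given orientation (restricting $D$'s arcs to the edge set $F_i$) can be \emph{locally corrected} — since the constraint $\Delta^-(D)\le k+1$ distributes the heads across the $k+1$ classes, a counting/defect-version of Hall's theorem or a simple discharging along each pseudo-tree component lets one reassign arcs among the classes so that each class becomes a pseudo-branching, while an arc stays in the component of $F_{k+1}$ it started in, preserving the size-$d$ (hence out-degree-$\le d$ after contracting to a branching) bound.

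Concretely, the steps in order: (1) apply Theorem~\ref{directed-strong-NDT} to $G[D]$ to get $F_1,\dots,F_{k+1}$ with components of $F_{k+1}$ of size $\le d$ — first verifying $k+\frac{d-k}{d+1}\le k+\frac{d}{k+d+1}$, i.e. $\frac{d-k}{d+1}\le\frac{d}{k+d+1}$, which holds since cross-multiplying gives $(d-k)(k+d+1)\le d(d+1)$, i.e. $d^2+d-k^2-k-dk\le d^2+d$, i.e. $-k^2-k-dk\le 0$, true for $k\ge1$, $d\ge0$; (2) within each pseudo-forest $F_i$, root each component and note the given orientation may conflict with the "toward the unique cycle / toward the root" orientation only in a controlled way; (3) resolve conflicts by a per-component matching argument — in each component the set of vertices that are heads of two or more $D$-arcs among $\bigcup F_i$ is globally bounded by the slack in $\Delta^-(D)\le k+1$, and a bipartite Hall-type argument moves the excess arcs between classes; (4) finally check $\Delta^+(C_{k+1})\le d$: since every component of $F_{k+1}$ has $\le d$ edges, after orienting it as a branching each component is an arborescence with $\le d$ arcs, so in particular each vertex has out-degree $\le d$ in $C_{k+1}$.

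The hard part will be step (3): reconciling the \emph{fixed} orientations of $D$'s arcs with the freedom one has in the undirected decomposition. Unlike the undirected setting, where one chooses the orientation of each pseudo-forest freely, here the arcs are pre-oriented and $\Delta^-(D)\le k+1$ is exactly the obstruction-freeness condition (cf.\ Theorem~\ref{directed-nash-williams-theorem}); the real content is showing this local in-degree bound, combined with the NDT-style sparsity, suffices to both keep every class a pseudo-branching \emph{and} keep the special class's components small. I expect this to require either an exchange/augmenting-path argument tailored to pseudo-branchings (in the spirit of Frank's proof of Theorem~\ref{directed-nash-williams-theorem}) or a direct adaptation of the Grout--Moore discharging that tracks heads rather than merely edges; reusing their potential function while adding a term penalizing vertices with in-degree $2$ in a class is the most promising line.
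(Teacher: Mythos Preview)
Your reduction to the undirected Grout--Moore theorem has a genuine gap at step~(3), and that gap is not a technicality: it is essentially the whole theorem. The undirected pseudo-forest decomposition of $G[D]$ carries no information about the \emph{given} orientations---a single class $F_i$ may contain several arcs of $D$ all pointing into the same vertex, so with the fixed orientation the subdigraph on $F_i$ need not be a pseudo-branching at all. Your proposed fix (moving arcs between classes via a Hall-type or discharging argument) immediately destroys the only thing the undirected decomposition bought you: moving an arc into or out of $F_{k+1}$ can enlarge a component or raise an out-degree there, so step~(4) no longer applies. (Step~(4) is also internally confused: you write ``after orienting it as a branching,'' but the arcs of $D$ are pre-oriented and cannot be re-oriented. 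The correct observation---that a component with at most $d$ edges has $\Delta^+\le d$ under \emph{any} orientation---is fine, but it becomes moot once step~(3) has shuffled arcs between classes.) In short, the bounded-component property of $F_{k+1}$ and the per-class in-degree-$\le 1$ constraint are governed by unrelated data once the orientations are fixed, and an arbitrary undirected decomposition gives no handle on reconciling them.

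The paper's proof does not pass through the undirected theorems. Since $\Delta^-(D)\le k+1$, the digraph trivially decomposes into $k+1$ pseudo-branchings (distribute each vertex's at most $k+1$ in-arcs among the classes). One takes such a decomposition minimizing the residue $\lambda=\sum_v\max\{d^+_{C_{k+1}}(v)-d,0\}$, picks a bad vertex $v_0$ with $d^+_{C_{k+1}}(v_0)>d$, and explores alternating trails $v_0\,\overrightarrow{a_1}\,v_1\,\overleftarrow{a_2}\,v_2\cdots$ with forward arcs in $C_{k+1}$ and backward arcs in the other $C_j$. If some $v_{2i}$ on such a trail had $d^+_{C_{k+1}}(v_{2i})<d$, swapping the pairs $(\overrightarrow{a_{2\ell-1}},\overleftarrow{a_{2\ell}})$ along the trail would reduce $\lambda$; otherwise a direct count on the subdigraph swept out by all alternating trails forces $\tfrac12\mad(D)>k+\tfrac{d-k}{d+1}$. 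This is exactly the ``exchange/augmenting-path argument tailored to pseudo-branchings'' you mention in your last sentence as a fallback---but it is the whole proof, not a patch on top of the undirected reduction.
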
 

Theorem \ref{main-pseudo-branching-dec} serves as one more piece of  evidence to support  Conjecture \ref{directed-NDT-conj}.  
We shall prove Theorem \ref{main-pseudo-branching-dec} in Section 3.  The bound on  $ \mad(D)$  in Theorem \ref{main-pseudo-branching-dec}  
is best possible, this shall be explained in Section \ref{Section-lower-B}.  

We need some notation for our proofs. 
Suppose $D=(V,A)$ is a digraph.  
For $X \subseteq V$, let $N_{D}^{-}(X) = \{y \in V: y \notin X, \mbox{ and } \exists x \in X, \mbox{ such that } \overrightarrow{yx} \in A \}$,
$N_{D}^{+}(X) = \{z \in V: z \notin X, \mbox{ and } \exists x \in X, \mbox{ such that } \overrightarrow{xz} \in A \}$,  
$D[X]$ denote the subdigraph of $D$ induced by $X$. When $X=\{v\}$, 
we write $N_{D}^{-}(v)$ for $N_{D}^{-}(\{v \})$, and name it the in-neighbors of $v$; $N_{D}^{+}(v)$ for $N_{D}^{+}(\{v \})$, and name it the out-neighbors of $v$. Sometimes, for simplicity, we use $A[X]$ to denote the arc set of $D[X]$.  

For $A_0 \subseteq A$, we define $D-A_0 := (V,A\setminus A_0)$. When  $A_{0}=\{a_{0}\}$, write $D-a_{0}$ for $D-\{a_{0}\}$. For an arc $a_{1} \notin A$ with its ends in $V$, we define $D+a_1 := (V,A \cup \{a_1\})$. 
Let $\Omega$ be a finite set.  For a  function $f:\Omega \rightarrow \mathbb{N}$,  define $\widetilde{f}: 2^{\Omega} \rightarrow \mathbb{N}$ as $\widetilde{f}(X)=\sum_{x \in X}f(x)$, where $X \subseteq \Omega$.


%

\section{The branching analogue for the NDT Theorem}  

In this section, we prove the cases  $d \leq k$ of Conjecture \ref{directed-NDT-conj}.  Our proof relies on the following proposition.   

\begin{proposition} \label{B_{k+1}-exist}
For digraph $D=(V,A)$, suppose $\{S,T \}$ is a partition of $V$, $f: V \rightarrow \mathbb{N}$ is a non-negative function. If for any $\emptyset \neq X \subseteq T $, 
\begin{equation} \label{submodular-function} 
 \widetilde{f}(N_{D}^{-}(X)) \geq |X|, 
\end{equation}
then there exists a branching $B$ in $D$ such that 
\begin{itemize}
\item[(i)] for any $v \in T$, $d_{B}^{-}(v)=1$;
\item[(ii)] for any $v \in V$, $d_{B}^{+}(v)\leq f(v) $.
\end{itemize}
\end{proposition}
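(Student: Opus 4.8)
The plan is to prove the proposition by induction on $|T|$, after one harmless preprocessing step. First I would delete from $D$ every arc whose head lies in $S$: such an arc is never the in-arc of a vertex of $T$, so it belongs to no branching of the desired form; its removal does not change $N_D^-(X)$ for any $X\subseteq T$, hence does not affect condition \eqref{submodular-function}; and it can only help inequality (ii). So it suffices to treat the case where $D$ has no arc with head in $S$. Under this assumption a spanning subdigraph of $D$ with $\Delta^-\le 1$ is a branching precisely when it has no directed cycle, so the goal becomes: select one in-arc for every $v\in T$ so that the selected set is acyclic and each vertex $w$ is the tail of at most $f(w)$ selected arcs. The base case $T=\emptyset$ is trivial. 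Call a set $\emptyset\neq X\subseteq T$ \emph{tight} if \eqref{submodular-function} holds with equality for it, and split the induction step according to whether some \emph{proper} nonempty subset of $T$ is tight.

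If no proper nonempty subset of $T$ is tight, I would apply \eqref{submodular-function} to $X=T$; since $N_D^-(T)\subseteq S$, this produces $s\in S$ with $f(s)\geq 1$ and an arc $\overrightarrow{st}\in A$ for some $t\in T$. Commit $\overrightarrow{st}$ to $B$ and pass to the instance with digraph $D-\partial_D^-(t)$, partition $\{S\cup\{t\},\,T\setminus\{t\}\}$, and the function obtained from $f$ by lowering $f(s)$ by $1$. Every nonempty $X\subseteq T\setminus\{t\}$ is a proper subset of $T$, hence not tight, so $\widetilde f(N_D^-(X))\geq|X|+1$ and the lowered function still satisfies \eqref{submodular-function}; the inductive hypothesis supplies a branching $B'$, and $B:=B'+\overrightarrow{st}$ is again a branching because no arc of $B'$ enters $s$, so $\overrightarrow{st}$ closes no cycle. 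The degree conditions for $B$ are then immediate.

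If instead some tight set $\emptyset\neq X_0\subsetneq T$ exists, I would solve two strictly smaller instances and glue them. For the \emph{inner} instance take the digraph $D_1$ obtained from $D[X_0\cup N_D^-(X_0)]$ by deleting every arc with head in $N_D^-(X_0)$, with partition $\{N_D^-(X_0),\,X_0\}$ and $f$ restricted; because $N_D^-(Y)\subseteq X_0\cup N_D^-(X_0)$ whenever $Y\subseteq X_0$, condition \eqref{submodular-function} is inherited, and since $|X_0|<|T|$ induction yields a branching $B_1$ with $d_{B_1}^-(v)=1$ for $v\in X_0$ and $d_{B_1}^+(v)\leq f(v)$ for all $v\in X_0\cup N_D^-(X_0)$. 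For the \emph{outer} instance take $D_2:=D$ with all arcs incident with $X_0$ removed, partition $\{S\cup X_0,\,T\setminus X_0\}$, and function $f_2:=f-d_{B_1}^+$ on $X_0\cup N_D^-(X_0)$ and $f_2:=f$ elsewhere; induction (now $|T\setminus X_0|<|T|$) yields $B_2$, and I would output $B:=B_1\cup B_2$.

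Two points carry the weight of the argument. The first, which I expect to be the main obstacle, is that \eqref{submodular-function} holds for the outer instance: for $\emptyset\neq X\subseteq T\setminus X_0$ one has $N_{D_2}^-(X)=N_D^-(X)\setminus X_0$, and combining the modularity of $\widetilde f$, the inclusion $N_D^-(X\cup X_0)\setminus N_D^-(X_0)\subseteq N_D^-(X)\setminus(X_0\cup N_D^-(X_0))$, and the equality $\widetilde f(N_D^-(X_0))=|X_0|$ gives $\widetilde f\bigl(N_D^-(X)\setminus(X_0\cup N_D^-(X_0))\bigr)\geq|X|$, whence $\widetilde{f_2}(N_{D_2}^-(X))\geq|X|$ since $d_{B_1}^+\leq f$ on $X_0\cup N_D^-(X_0)$. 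The second is that $B_1\cup B_2$ is acyclic, and this is exactly why $D_2$ is formed by deleting \emph{all} arcs incident with $X_0$: a directed cycle of $B_1\cup B_2$ that meets $X_0$ would, since $B_2$ has no arc incident with $X_0$ and every arc of $B_1$ has its head in $X_0$, consist entirely of arcs of $B_1$ lying inside $X_0$, contradicting that $B_1$ is a branching; and a directed cycle missing $X_0$ would consist entirely of arcs of $B_2$, contradicting that $B_2$ is a branching. Finally $B_1$ and $B_2$ are arc-disjoint (their heads lie in the disjoint sets $X_0$ and $T\setminus X_0$), $d_B^-$ equals $1$ on $T$ and $0$ on $S$, and $d_B^+(v)\leq d_{B_1}^+(v)+f_2(v)=f(v)$, completing the verification.
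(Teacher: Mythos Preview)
Your proof is correct and takes a genuinely different route from the paper's. The paper builds $B$ greedily, one arc at a time: it maintains dynamic sets $S^{*}\supseteq S$, $T^{*}\subseteq T$ and a reduced function $f^{*}$, picks some $s_{0}\in N_{D}^{-}(T^{*})$ with $f^{*}(s_{0})>0$, and then chooses $t_{0}\in T^{*}$ so that the Hall-type inequality survives the update; the delicate case is when there exist tight sets $X\subseteq T^{*}$ with $s_{0}\in N_{D}^{-}(X)$, and the paper shows, via the submodularity of $X\mapsto\widetilde{f^{*}}(N_{D}^{-}(X))$, that such sets are closed under intersection, so that taking $t_{0}$ inside the unique minimal one works. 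Your argument is instead a divide-and-conquer induction on $|T|$: when no proper tight subset of $T$ exists you commit one arc freely (the analogue of the paper's easy case), and when a tight $X_{0}\subsetneq T$ exists you solve an inner instance on $X_{0}$ and an outer instance on $T\setminus X_{0}$ with capacities reduced by what the inner branching consumed, then glue. The same submodular inequality that the paper isolates as a lemma is doing the work in your verification that the outer instance still satisfies \eqref{submodular-function}, only packaged as a direct additive estimate. The paper's approach is more explicitly algorithmic; yours is closer in spirit to the textbook inductive proof of Hall's theorem and is notationally a bit lighter, the preprocessing step (deleting arcs with head in $S$) being what lets you reduce ``branching'' to ``$\Delta^{-}\le 1$ and no directed cycle'' and thereby keep the gluing argument clean.
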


\begin{proof}
Initialize the branching $B$ with  vertex set $V$ and empty arc set. 
If  $T=\emptyset$, then $B$ is the branching as demanded.  
Suppose $T \neq \emptyset$. 


In the proof, we use three dynamic variables $S^{*}$, $T^{*}$ and $f^{*}$, their values will be updated  in the process. 
Initially, let  $S^{*}:=S$, $T^{*}:=T$ and $f^{*}:=f$. 
We shall maintain the following three properties at any time of our proof:  
\begin{itemize}
\item[(a)] for any $v \in T \setminus T^*$, $d_{B}^{-}(v)=1$;
\item[(b)] for any $v \in S^*$, $ d_{B}^{+}(v) + f^*(v) \leq  f(v)$;
\item[(c)] for any $v \in T^*$, $d_{B}^{+}(v)=d_{B}^{-}(v)=0$. 
\end{itemize}
Note that initially Properties $(a)$, $(b)$ and $(c)$ are satisfied.   
By Inequality (\ref{submodular-function}),  $\widetilde{f}(N_{D}^{-}(T))$ $=$   $\widetilde{f^*}(N_{D}^{-}(T^*)) \geq |T^*| = |T| >0$;  
since $\{S,T \}$ is a partition of $V$,  $N_{D}^{-}(T) \subseteq S $; 
it follows that there exists $s_{0} \in N_{D}^{-}(T^*) \subseteq S^* $ such that $ f^*(s_{0}) >0$.

In the proof, we shall construct the demanded branching $B$ by adding \lq suitable\rq~ arcs one by one.  To find a suitable arc, we begin with an  $s_{0} \in N_{D}^{-}(T^*) \subseteq S^* $ such that $ f^*(s_{0}) >0$.  Then we look for a \lq suitable\rq~ $t_{0} \in T^*$ such that $s_{0}t_{0} \in A $.  
Then we update $B:=B+ \overrightarrow{s_{0}t_{0}}$, $S^* :=S^* + t_{0}$, $T^*:=T^*-t_{0}$ and $f^*(s_{0}):=f(s_{0})-1$.    
It is clear that $\{S^*, T^* \}$ is still a partition of $V$, $B$ is a branching, and  Properties $(a)$, $(b)$ and $(c)$ are still satisfied.  
This \lq suitable\rq~ $t_{0}$ needs to satisfy that after we do the update above, the following inequality still holds:  
\begin{equation} \label{submodular-function-*} 
 \mbox{for any } \emptyset \neq X \subseteq T^*, ~ \widetilde{f^*}(N_{D}^{-}(X)) \geq |X|.  
\end{equation}

The above process continues until $T^* = \emptyset$.  
Then Properties $(a)$ and $(b)$ imply  the final branching $B$ is the one as demanded.   

Now suppose at the induction step of the above process, $T^* \neq \emptyset$. Inequality (\ref{submodular-function-*}) holds by induction hypothesis. 
By Inequality (\ref{submodular-function-*}),  there exists an $s_{0} \in N_{D}^{-}(T^*) \subseteq S^* $ such that $ f^*(s_{0}) >0$. 

Next we show that there exists a $t_{0} \in T^*$ such that $s_{0}t_{0} \in A $ and Inequality (\ref{submodular-function-*}) still holds after we update  $B:=B+ \overrightarrow{s_{0}t_{0}}$, $S^* := S^* + t_{0}$, $T^* :=T^*-t_{0}$ and $f^*(s_{0}) := f^*(s_{0})-1$. 

\smallskip 

\noindent {\bf Case 1.}  Suppose there exists no $ X \subseteq T^*$ such that $ s_{0} \in N_{D}^{-}(X)$ and $\widetilde{f^*}(N_{D}^{-}(X)) = |X| $.  
Then for any $ X \subseteq T^*$ such that $ s_{0} \in N_{D}^{-}(X)$, by Inequality  (\ref{submodular-function-*}), 
we have
\begin{equation}\label{strict>}
\widetilde{f^*}(N_{D}^{-}(X)) > |X| . 
\end{equation} 

Since $s_{0} \in N_{D}^{-}(T^*)$, there exists a $t_{0} \in T^*$ such that $s_{0}t_{0} \in A$. Update  $B:=B+ \overrightarrow{s_{0}t_{0}}$, $S^* :=S^* + t_{0}$, $T^*:=T^*-t_{0}$ and $f^*(s_{0}):=f^*(s_{0})-1$.  

We show that Inequality (\ref{submodular-function-*}) still holds in the induction step. 
Let $X \subseteq T^*$. If $s_{0} \notin N_{D}^{-}(X)$, then by the induction hypothesis of Inequality (\ref{submodular-function-*}), $ \widetilde{f^*}(N_{D}^{-}(X)) \geq |X|$. If $s_{0} \in N_{D}^{-}(X)$, then by (\ref{strict>}), in the induction step, $ \widetilde{f^*}(N_{D}^{-}(X))+1 > |X|$, thus $ \widetilde{f^*}(N_{D}^{-}(X)) \geq |X|$. 

\smallskip

\noindent {\bf Case 2 (not Case 1).}  Suppose there exists some $ X_{0} \subseteq T^*$ such that $ s_{0} \in N_{D}^{-}(X_{0})$ and $\widetilde{f^*}(N_{D}^{-}(X_{0})) = |X_{0}| $. 
Define 
$$\mathcal{E} = \{X \subseteq T^*:  s_{0} \in N_{D}^{-}(X) \text{ and }  \widetilde{f^*}(N_{D}^{-}(X)) = |X| \}.$$  
Next we show two claims.  

\begin{claim} \label{prove-to-be-submodular}
For $X_{1}, X_{2} \subseteq T^*$ such that $s_{0} \in N_{D}^{-}(X_{1}) \cap N_{D}^{-}(X_{2})$,
\begin{equation} \label{submodular-inequality}
\widetilde{f^*}(N_{D}^{-}(X_{1})) +  \widetilde{f^*}(N_{D}^{-}(X_{2})) \geq \widetilde{f^*}(N_{D}^{-}(X_{1} \cup X_{2})) + \widetilde{f^*}(N_{D}^{-}(X_{1} \cap X_{2})).
\end{equation}
Moreover, if the equality holds, then $s_{0} \in N_{D}^{-}(X_{1} \cup X_{2}) \cap N_{D}^{-}(X_{1} \cap X_{2})$.
\end{claim}

\begin{proof}
For $X \subseteq V$, define $\chi_{X}: V \rightarrow \{0, 1 \}$ as 
\[
\chi_{X}(v)=
\begin{cases}
1, & v \in X,\\ 
0, & v \notin X.
\end{cases}
\]
Then $\widetilde{f^*}(X)= \sum_{v \in V} f^*(v)\chi_{X}(v)  $. 
Next we show that for any $v \in V$, 
\begin{equation}\label{chi-submodular}
\chi_{N_{D}^{-}(X_{1})}(v) + \chi_{N_{D}^{-}(X_{2})}(v) \geq \chi_{N_{D}^{-}(X_{1} \cup X_{2})}(v) + \chi_{N_{D}^{-}(X_{1} \cap X_{2})}(v).
\end{equation}
If $\chi_{N_{D}^{-}(X_{1} \cup X_{2})}(v)=1$, that is $v \in N_{D}^{-}(X_{1} \cup X_{2}) $, then $v \in N_{D}^{-}(X_{1})$ or $N_{D}^{-}(X_{2})$; thus $\chi_{N_{D}^{-}(X_{1})}(v)=1$ or $ \chi_{N_{D}^{-}(X_{2})}(v)=1$. 
If $\chi_{N_{D}^{-}(X_{1} \cap X_{2})}(v)=1$, then $v \notin X_{1} \cap X_{2}$; thus $ v \in N_{D}^{-}(X_{1})$ (if $v \notin X_{1}$) or $ v \in N_{D}^{-}(X_{2}) $ (if $v \notin X_{2}$). 
If $\chi_{N_{D}^{-}(X_{1} \cup X_{2})}(v)= \chi_{N_{D}^{-}(X_{1} \cap X_{2})}(v)= 1$, then $v \notin X_{1} \cup X_{2}$; thus $\chi_{N_{D}^{-}(X_{1} \cap X_{2})}(v)= 1$ implies $v \in  N_{D}^{-}(X_{1}) \cap N_{D}^{-}(X_{2})$. 
These three cases prove Inequality (\ref{chi-submodular}).  

It follows from (\ref{chi-submodular}) that 
\begin{equation}\label{eq-chi-submodular}
\begin{split}
& \sum_{v \in V}f^*(v)\chi_{N_{D}^{-}(X_{1})}(v) + \sum_{v \in V}f^*(v)\chi_{N_{D}^{-}(X_{2})}(v) \\
\geq ~~ & \sum_{v \in V} f^*(v)\chi_{N_{D}^{-}(X_{1} \cup X_{2})}(v) + \sum_{v \in V}f^*(v)\chi_{N_{D}^{-}(X_{1} \cap X_{2})}(v).   
\end{split} 
\end{equation}  
This proves $\widetilde{f^*}(N_{D}^{-}(X_{1})) +  \widetilde{f^*}(N_{D}^{-}(X_{2})) \geq \widetilde{f^*}(N_{D}^{-}(X_{1} \cup X_{2})) + \widetilde{f^*}(N_{D}^{-}(X_{1} \cap X_{2}))$. 

By assumption, $s_{0} \in N_{D}^{-}(X_{1}) \cap N_{D}^{-}(X_{2})$ and $f^*(s_{0}) >0$.  If the equality of (\ref{submodular-inequality}) (or equivalently, the equality of (\ref{eq-chi-submodular})) holds, it follows from (\ref{chi-submodular}) that 
\[
\chi_{N_{D}^{-}(X_{1})}(s_{0}) + \chi_{N_{D}^{-}(X_{2})}(s_{0}) = \chi_{N_{D}^{-}(X_{1} \cup X_{2})}(s_{0}) + \chi_{N_{D}^{-}(X_{1} \cap X_{2})}(s_{0}).
\]

Since $s_{0} \in N_{D}^{-}(X_{1}) \cap N_{D}^{-}(X_{2})$, we have $s_{0} \in N_{D}^{-}(X_{1} \cup X_{2}) \cap N_{D}^{-}(X_{1} \cap X_{2})$. 
This proves the claim. 
\end{proof}


\begin{claim}\label{intersection-closure}
If $X_{1}, X_{2} \in \mathcal{E}$, then $X_{1} \cap X_{2} \in \mathcal{E}$.
\end{claim} 

\begin{proof}
	Note that $|X_{1}| + |X_{2}|= |X_{1} \cup X_{2}| + |X_{1} \cap X_{2}|$.
	Combining this with Inequality (\ref{submodular-inequality}) of Claim  \ref{prove-to-be-submodular}, we have
	\begin{equation}\label{submodular-||}
		\begin{split}
			& \widetilde{f^{*}}(N_{D}^{-}(X_{1}))-|X_{1}| +  \widetilde{f^{*}}(N_{D}^{-}(X_{2}))-|X_{2}|\\
			\geq ~ & \widetilde{f^{*}}(N_{D}^{-}(X_{1} \cup X_{2}))-|X_{1} \cup X_{2}| + \widetilde{f^{*}}(N_{D}^{-}(X_{1} \cap X_{2}))-|X_{1} \cap X_{2}|.
		\end{split}
	\end{equation}
	Since $X_{1}, X_{2} \in \mathcal{E}$, we have $\widetilde{f^{*}}(N_{D}^{-}(X_{i}))=|X_{i}|$ ($i=1, 2$).  It follows from  (\ref{submodular-||}) that
	\begin{equation}\label{submodular-=}
		0 \geq \widetilde{f^{*}}(N_{D}^{-}(X_{1} \cup X_{2}))-|X_{1} \cup X_{2}| + \widetilde{f^{*}}(N_{D}^{-}(X_{1} \cap X_{2}))-|X_{1} \cap X_{2}|.
	\end{equation}
	
	By Inequality (\ref{submodular-function-*}), $\widetilde{f^{*}}(N_{D}^{-}(X_{1} \cup X_{2})) \geq |X_{1} \cup X_{2}|$ and  $\widetilde{f^{*}}(N_{D}^{-}(X_{1} \cap X_{2})) \geq |X_{1} \cap X_{2}|$; combining  Inequality (\ref{submodular-=}),
	we have $\widetilde{f^{*}}(N_{D}^{-}(X_{1} \cup X_{2}))=|X_{1} \cup X_{2}|$ and $\widetilde{f^{*}}(N_{D}^{-}(X_{1} \cap X_{2}))=|X_{1} \cap X_{2}|$.
	
	So the equality of (\ref{submodular-||}) holds, therefore the equality of (\ref{submodular-inequality}) holds. Since $X_{1}, X_{2} \in \mathcal{E}$, $s_{0} \in N^{-}_{D}(X_{1}) \cap N^{-}_{D}(X_{2})$. By Claim~\ref{prove-to-be-submodular}, we have $s_{0} \in N^{-}_{D}(X_{1} \cup X_{2}), N_{D}^{-}(X_{1} \cap X_{2})$.
	Hence, $X_{1} \cap X_{2} \in \mathcal{E}$.
\end{proof}


For  $X_{1}, X_{2} \in \mathcal{E}$, define  $X_{1} \prec X_{2}$ if and only if   $X_{1} \subseteq X_{2}$. By Claim \ref{intersection-closure}, we can suppose  $\emptyset \neq X_{0} \in \mathcal{E}$ is a minimal element in $\mathcal{E}$ with respect to $\prec$.   
We pick a $t_0 \in X_0$ with $s_0t_0 \in A$. 
Update $B:=B+ \overrightarrow{s_{0}t_{0}}$, $S^*:=S^* + t_{0}$, $T^*:=T^*-t_{0}$ and $f^*(s_{0}) := f^*(s_{0})-1$. 


We are left to show that Inequality (\ref{submodular-function-*}) still holds after these updates.  


Suppose $\emptyset \neq X \subseteq T^*$. 
If $s_{0} \notin N_{D}^{-}(X)$, then by induction hypothesis of  (\ref{submodular-function-*}),  
$ \widetilde{f^*}(N_{D}^{-}(X)) \geq |X|$.  
Next, suppose $s_{0} \in N_{D}^{-}(X)$, and suppose to the contrary of Inequality (\ref{submodular-function-*}),   
that $\widetilde{f^*}(N_{D}^{-}(X)) < |X|$, then $\widetilde{f^*}(N_{D}^{-}(X))+1 \leq |X|$.  
By induction hypothesis of (\ref{submodular-function-*}), we have  $\widetilde{f^*}(N_{D}^{-}(X))+1 \geq |X|$. 
Thus $\widetilde{f^*}(N_{D}^{-}(X))+1 = |X|$. Combining $s_{0} \in N_{D}^{-}(X)$, we have $X \in \mathcal{E}$ before the updates. Applying  Claim~\ref{intersection-closure}, we have $\emptyset \neq X \cap X_{0} \in \mathcal{E} $. Notice the fact that $t_{0} \in X_{0} \setminus X$. Hence, $X \cap X_{0} \subsetneqq X_{0}$, but this contradicts the minimality of $X_{0}$. 
The contradiction proves $ \widetilde{f^*}(N_{D}^{-}(X)) \geq |X|$.  This proves  \textbf{Case 2}, thus Proposition~\ref{B_{k+1}-exist}.
\end{proof}  

Next we use Proposition~\ref{B_{k+1}-exist} proving the cases  $d \leq k$ of Conjecture \ref{directed-NDT-conj}. 

\begin{thm}\label{main-branching-dec}
	For positive integers $k$ and $d$ with $d \leq k$, if $D=(V,A)$ is a digraph with 
	$\gamma(D) \leq  k + \frac{d-k}{d+1}$ 
	and $\Delta^{-}(D) \leq k+1$, then $D$ decomposes into $k + 1$ branchings $B_{1}, \ldots, B_{k}, B_{k+1}$ with  $\Delta^{+}(B_{k+1}) \leq d$. 
\end{thm}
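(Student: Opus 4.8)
The plan is to split off the special branching $B_{k+1}$ first, by applying Proposition~\ref{B_{k+1}-exist} to a carefully chosen partition, and then to hand what remains to Frank's characterization (Theorem~\ref{directed-nash-williams-theorem}). The starting point is the identity $k+\frac{d-k}{d+1}=\frac{d(k+1)}{d+1}$, so the hypothesis reads $\gamma(D)\le \frac{d(k+1)}{d+1}$; and since $d\le k$ one checks $\frac{d(k+1)}{d+1}\le k$, which is exactly what makes this range of parameters manageable (the leftover digraph will automatically have fractional arboricity at most $k$).

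\emph{Setting up Proposition~\ref{B_{k+1}-exist}.} Let $T=\{v\in V:\ d_D^-(v)=k+1\}$ be the set of ``tight'' vertices, let $S=V\setminus T$, and take $f\equiv d$. I must verify that $\widetilde f(N_D^-(X))\ge |X|$, i.e.\ $d\,|N_D^-(X)|\ge |X|$, for every $\emptyset\ne X\subseteq T$. Put $Y=N_D^-(X)$, so $X\cap Y=\emptyset$. Every one of the $\sum_{x\in X}d_D^-(x)=(k+1)|X|$ arcs with head in $X$ has its tail in $X\cup Y$ (there are no loops), hence the induced subdigraph $H=D[X\cup Y]$ satisfies $|A(H)|\ge (k+1)|X|$; since $k+1\ge 2$ this already forces $Y\ne\emptyset$ (otherwise $H=D[X]$ has empty arc set when $|X|=1$, and violates the fractional arboricity bound when $|X|>1$). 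Now $|V(H)|=|X|+|Y|\ge 2$, so $|A(H)|\le \gamma(D)(|X|+|Y|-1)\le \frac{d(k+1)}{d+1}(|X|+|Y|-1)$. Combining the two estimates and cancelling the factor $k+1$ gives $|X|\le d(|Y|-1)$, whence $d\,|N_D^-(X)|=d|Y|\ge |X|+d>|X|$, as required.

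\emph{Extracting $B_{k+1}$ and concluding.} Proposition~\ref{B_{k+1}-exist} now yields a branching $B_{k+1}$ in $D$ with $d_{B_{k+1}}^-(v)=1$ for every $v\in T$ and $d_{B_{k+1}}^+(v)\le f(v)=d$ for every $v\in V$, i.e.\ $\Delta^+(B_{k+1})\le d$. Set $D'=D-A(B_{k+1})$. For $v\in T$ we get $d_{D'}^-(v)=(k+1)-1=k$, while for $v\in S$ we have $d_{D'}^-(v)\le d_D^-(v)\le k$ (since $\Delta^-(D)\le k+1$ and $v\notin T$); hence $\Delta^-(D')\le k$. Moreover $\gamma(D')\le \gamma(D)\le k$, because fractional arboricity is monotone under passing to subdigraphs. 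By Theorem~\ref{directed-nash-williams-theorem}, $D'$ decomposes into $k$ branchings $B_1,\dots,B_k$, and since $A(B_{k+1})$ is disjoint from $A(D')$, the family $B_1,\dots,B_k,B_{k+1}$ is a decomposition of $D$ with $\Delta^+(B_{k+1})\le d$.

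\emph{Where the work is.} The only substantive step is the inequality in the second paragraph: it is there that the threshold $\frac{d(k+1)}{d+1}$ is forced (taking $X$ to be a single tight vertex with the fewest in-neighbors shows no weaker fractional-arboricity bound would suffice), that the hypothesis $\Delta^-(D)\le k+1$ enters through the count $(k+1)|X|$, and that the assumption $d\le k$ is used so that $D'$ automatically obeys $\gamma(D')\le k$. For $d>k$ this last point fails, since $\gamma(D)$ may exceed $k$ and $D'$ need no longer be a union of $k$ branchings; that is precisely the obstruction the full Conjecture~\ref{directed-NDT-conj} has to overcome, and the reason this argument is confined to $d\le k$.
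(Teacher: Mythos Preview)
Your proof is correct and follows essentially the same route as the paper: define $T=\{v:d_D^-(v)=k+1\}$, $S=V\setminus T$, $f\equiv d$, verify the hypothesis of Proposition~\ref{B_{k+1}-exist} via the fractional-arboricity bound on $D[X\cup N_D^-(X)]$, peel off $B_{k+1}$, and finish with Theorem~\ref{directed-nash-williams-theorem}. The only cosmetic difference is that the paper bounds $|A(D_X)|/|V(D_X)|$ and then passes to $|V(D_X)|-1$ to get $d|N_D^-(X)|>|X|$, whereas you bound $|A(H)|/(|V(H)|-1)$ directly and obtain the slightly sharper $|X|\le d(|Y|-1)$; both yield $\widetilde f(N_D^-(X))\ge |X|$.
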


\begin{proof} 
	Note that since $d \leq k$, we have $\gamma(D) \leq  k + \frac{d-k}{d+1} \leq k$.  
	Suppose $S = \{v \in V: d_{D}^{-}(v) \leq k \}$, $T=\{v \in V: d_{D}^{-}(v)=k+1 \}$,  then $\{S, T \}$ is a partition of $V$. Define $f: V \rightarrow \mathbb{N}$ as  $f(v)=d$ for all $v \in V$. 
	
	
	If $T = \emptyset$, then $\Delta^{-}(D) \leq k$. As $\gamma(D) \leq k$, by Theorem~\ref{directed-nash-williams-theorem}, $D$ decomposes into $k$ branchings.  So we suppose $T \neq \emptyset$. 
	
	For $\emptyset \neq X \subseteq T$, let $D_{X}$ denote the subdigraph of $D$ induced by $X \cup N_{D}^{-}(X)$. Then $d^{-}_{D_{X}}(v)=k+1$ for each $v \in X$, and $|A(D_{X})| \geq \sum_{v \in X}d_{D_{X}}^{-}(v)=(k+1)|X|$.   
	By the assumption $\gamma(D) \leq  k + \frac{d-k}{d+1} = \frac{d(k+1)}{d+1}$, 
	we have
	\[
	\frac{(k+1) |X|}{|X|+ |N_{D}^{-}(X)|} \leq \frac{|A(D_{X})|}{|V(D_{X})|} < \frac{|A(D_{X})|}{|V(D_{X})|-1} \leq \gamma(D) \leq  \frac{d(k+1)}{d+1}.
	\]
	It follows that $d|N_{D}^{-}(X)| > |X|$, that is $\widetilde{f}(N_{D}^{-}(X)) >  |X|$. 
	
	By Proposition~\ref{B_{k+1}-exist}, there exists a branching $B_{k+1}$ such that: $(i)$ for any $v \in T$, $d_{B_{k+1}}^{-}(v)=1$; $(ii)$ for any $v \in V$, $d_{B_{k+1}}^{+}(v)\leq f(v)=d $, that is $\Delta^{+}(B_{k+1}) \leq d$. Since   $\Delta^{-}(D) \leq k+1$ and  $d_{B_{k+1}}^{-}(v)=1$ for $v \in T$,   $\Delta^{-}(D-A(B_{k+1})) \leq k$.
	Since $\gamma(D-A(B_{k+1})) \leq \gamma(D) \leq k$, by Theorem~\ref{directed-nash-williams-theorem}, $D-A(B_{k+1})$ decomposes into $k$ branchings $B_{1}, \ldots, B_{k}$. This proves that $D$ decomposes into $k+1$ branchings $B_{1}, \ldots,  B_{k}, B_{k+1}$ with $\Delta^{+}(B_{k+1}) \leq d$. This proves Theorem~\ref{main-branching-dec}.      
\end{proof}

\section{The pseudo-branching analogue for the NDT Theorem}  

In this section, we prove an analogue in digraphs corresponding to Theorem  \ref{pseudo-forest-NDT}, which is  the following theorem. Note that Theorem  \ref{main-pseudo-branching-dec} is also a  pseudo-branching analogue of Conjecture \ref{directed-NDT-conj} in digraphs.  

\smallskip  

\noindent
{\bf Theorem~\ref{main-pseudo-branching-dec}}.  {\it  
	For positive integers $k$ and $d$, if $D$ is a digraph with $\frac{1}{2}\mad(D)$   $\leq$  $k + \frac{d-k}{d+1}$  
	and $\Delta^{-}(D) \leq k+1$, then $D$ decomposes into $k + 1$ pseudo-branchings $C_{1}, \ldots, C_{k}, C_{k+1}$ with $\Delta^{+}(C_{k+1}) \leq d$.  
}

\smallskip  

\begin{proof} 
	The proof is by contradiction. Suppose $D$ is a counterexample of Theorem~\ref{main-pseudo-branching-dec} with minimal number of arcs.  
	By Theorem \ref{Hakimi}, we suppose $D$ decomposes into $k+1$ pseudo-branchings $C_{1}, \ldots, C_{k}, C_{k+1}$ such that $ \lambda = \sum_{v \in V(D)} \max  \{d^{+}_{C_{k+1}}(v)-d, 0 \}$ is minimum.  Then $\Delta^{+}(C_{k+1}) > d$ and $ \lambda >0 $.   We name $ \lambda $ the residue of this decomposition.

\begin{claim}\label{in-degree=k+1-v}
For vertices $v \in V(D)$, if $d^{-}_{D}(v) > 0$, then $d^{-}_{D}(v)=k+1$.  
\end{claim}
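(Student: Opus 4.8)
The plan is to use the arc-minimality of the counterexample $D$. Since $\Delta^{-}(D)\le k+1$, every vertex satisfies $d_{D}^{-}(v)\in\{0,1,\dots,k+1\}$, and the claim is automatic when $d_{D}^{-}(v)\in\{0,k+1\}$; so suppose for contradiction that some $v\in V(D)$ has $1\le d_{D}^{-}(v)\le k$. Pick an arc $a=\overrightarrow{uv}\in\partial_{D}^{-}(v)$ and pass to $D-a$.

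First I would check that $D-a$ still satisfies the hypotheses of Theorem~\ref{main-pseudo-branching-dec}: deleting an arc does not increase $\mad$, so $\tfrac12\mad(D-a)\le\tfrac12\mad(D)\le k+\tfrac{d-k}{d+1}$, and trivially $\Delta^{-}(D-a)\le\Delta^{-}(D)\le k+1$. As $D-a$ has fewer arcs than $D$ and satisfies the hypotheses, minimality of $D$ forces $D-a$ to decompose into pseudo-branchings $C_{1},\dots,C_{k},C_{k+1}$ with $\Delta^{+}(C_{k+1})\le d$. Now I would count in-arcs at $v$: we have $\sum_{i=1}^{k+1}d_{C_{i}}^{-}(v)=d_{D-a}^{-}(v)=d_{D}^{-}(v)-1\le k-1$, while $d_{C_{i}}^{-}(v)\in\{0,1\}$ for each $i$ since each $C_{i}$ is a pseudo-branching; hence at least $(k+1)-(k-1)=2$ indices $i$ satisfy $d_{C_{i}}^{-}(v)=0$, and in particular at least one such index lies in $\{1,\dots,k\}$. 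For that index $i$, the digraph $C_{i}+a$ is spanning and the only in-degree it alters is $d^{-}(v)$, raised from $0$ to $1$, so $C_{i}+a$ is again a pseudo-branching; since $i\ne k+1$, the pseudo-branching $C_{k+1}$ is untouched, so $\Delta^{+}(C_{k+1})\le d$ persists. Then $C_{1},\dots,C_{i}+a,\dots,C_{k+1}$ is a decomposition of $D$ of exactly the forbidden type, contradicting that $D$ is a counterexample; this proves $d_{D}^{-}(v)=k+1$ whenever $d_{D}^{-}(v)>0$.

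This is a purely bookkeeping reduction and I do not expect any real obstacle: the only points worth a remark are that the two in-degree-zero pseudo-branchings at $v$ cannot both be $C_{k+1}$ (since there are two of them), and that re-inserting $a$ into one of $C_{1},\dots,C_{k}$ leaves $C_{k+1}$ — hence its maximum out-degree and the residue $\lambda$ — completely unchanged. (Equivalently, one could delete all of $\partial_{D}^{-}(v)$ at once, decompose $D-\partial_{D}^{-}(v)$ by minimality, and redistribute the at most $k$ deleted arcs one per pseudo-branching among $C_{1},\dots,C_{k}$.) This claim is the routine structural step on which the later, more delicate arguments about the residue-minimal decomposition are built.
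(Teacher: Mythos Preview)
Your proof is correct and follows essentially the same minimality-reduction as the paper. The only cosmetic difference is that the paper deletes all of $\partial_{D}^{-}(v)$ at once and redistributes the $l\le k$ deleted arcs among $C_{1},\dots,C_{l}$ (exactly the variant you mention in your parenthetical remark), whereas your main argument removes a single arc and uses a pigeonhole count to find a free slot among $C_{1},\dots,C_{k}$; both routes are the same idea and equally valid.
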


\begin{proof}
	As $\Delta^{-}(D)\leq k+1$, we have $d^{-}_{D}(u) \leq k+1$ for all $u \in V(D)$.  
	Suppose to the contrary that there exists some vertex $v \in V(D)$ such that $ 0< d_{D}^{-}(v) \leq  k$. By the minimality of $A(D)$, $D - \partial_{D}^{-}(v)$ decomposes into  $k+1$ pseudo-branchings $C'_{1}, \ldots, C'_{k}, C'_{k+1}$ such that $\Delta^{+}(C'_{k+1}) \leq d$. Suppose  $\partial_{D}^{-}(v)= \{a_{1}, \ldots, a_{l} \}$, where $1 \le l \leq k$. For $1 \leq i \leq l$,
	$C'_{i}+a_{i}$ is still a pseudo-branching. Hence, $D$ decomposes into $k+1$ pseudo-branchings $C'_{1}+a_{1}, \ldots, C'_{l}+a_{l}$, $C'_{l+1}, \ldots, C'_{k+1}$ such that $\Delta^{+}(C'_{k+1}) \leq d$, a contradiction to the assumption.
\end{proof}

For all $v \in V(D)$, since each $C_i$ ($1\le i \le k+1$) is a  pseudo-branching,  $d_{C_{i}}^{-}(v) \leq 1$.  
Throughout the proof,  we fix a vertex $v_{0}$ such that $d_{C_{k+1}}^{+}(v_{0}) > d$.    
An {\em alternating trail (from $v_0$)} is a vertex-arc sequence $v_{0} \overrightarrow{a_{1}}v_{1} \overleftarrow{a_2}v_2 $ $\ldots $ $\overrightarrow{a_{2t-1}} v_{2t-1} \overleftarrow{a_{2t}} v_{2t} $ 
such that $\overrightarrow{a_{1}} \overleftarrow{a_2} $ $\ldots $  $\overrightarrow{a_{2t-1}}  \overleftarrow{a_{2t}}$ are distinct and alternating in  directions along the trail,  
$\overrightarrow{a_{2i-1}} =  \overrightarrow{v_{2i-2}v_{2i-1}} \in A(C_{k+1})$ 
and $\overleftarrow{a_{2i}} = \overleftarrow{v_{2i-1}v_{2i}} \in A(D)$ for $1 \leq i \leq t$. An alternating trail is allowed to have an odd number of arcs by omitting  the last arc in the above definition.  

Note the following fact: 
For $1 \leq i \leq t$, since $\overrightarrow{a_{2i-1}} = \overrightarrow{v_{2i-2}v_{2i-1}} \in A(C_{k+1})$ and $d_{C_{k+1}}^{-}(v_{2i-1}) = 1$, we derive that $\overleftarrow{a_{2i}} \notin A(C_{k+1})$.  

\begin{claim}\label{v_{2i}-geqk}
If $v_{0} \overrightarrow{a_{1}}v_{1} \overleftarrow{a_2}v_2 $ $\ldots $ $\overrightarrow{a_{2t-1}} v_{2t-1} \overleftarrow{a_{2t}} v_{2t} $ 
is an alternating  trail, then $ d_{C_{k+1}}^{+}(v_{2i}) \geq d$ for $1 \leq i \leq t$. 
\end{claim}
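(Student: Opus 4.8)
The plan is to argue by contradiction, exploiting the minimality of the residue $\lambda$ of the fixed decomposition $C_{1},\dots,C_{k+1}$. Suppose the claim fails and pick $i$ with $1\le i\le t$ and $d_{C_{k+1}}^{+}(v_{2i})<d$, i.e. $d_{C_{k+1}}^{+}(v_{2i})\le d-1$. Truncating the given trail at $v_{2i}$ leaves an alternating trail $v_{0}\overrightarrow{a_{1}}v_{1}\overleftarrow{a_{2}}v_{2}\cdots\overrightarrow{a_{2i-1}}v_{2i-1}\overleftarrow{a_{2i}}v_{2i}$, so it is enough to produce from this trail a decomposition of $D$ into $k+1$ pseudo-branchings with residue strictly smaller than $\lambda$. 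Before doing so I would record two facts: first, the vertices $v_{1},v_{3},\dots,v_{2i-1}$ are pairwise distinct, since each $v_{2j-1}$ is the head of $a_{2j-1}\in A(C_{k+1})$ and $d_{C_{k+1}}^{-}(\cdot)\le 1$; second, by the fact noted just before the claim, $\overleftarrow{a_{2j}}\notin A(C_{k+1})$, hence each even arc $a_{2j}$ lies in a unique part $C_{c_{j}}$ with $c_{j}\le k$.

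The main construction is a simultaneous exchange of arcs along the trail: for each $j\in\{1,\dots,i\}$ I would move $a_{2j-1}$ out of $C_{k+1}$ and into $C_{c_{j}}$, and move $a_{2j}$ out of $C_{c_{j}}$ and into $C_{k+1}$, leaving every other arc where it was. The step that needs care is checking that the new parts are still pseudo-branchings, i.e. that all in-degrees stay at most $1$. In $C_{k+1}$ the arcs deleted have heads $v_{1},v_{3},\dots,v_{2i-1}$, and the arcs inserted ($a_{2},a_{4},\dots,a_{2i}$) have exactly the same set of heads; since those heads are distinct, the in-degree of every vertex in $C_{k+1}$ is unchanged. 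In a part $C_{c}$ with $c\le k$, for each $j$ with $c_{j}=c$ we replace the in-arc $a_{2j}$ of $v_{2j-1}$ by the in-arc $a_{2j-1}$ of the same vertex $v_{2j-1}$; because the $v_{2j-1}$ are pairwise distinct, no vertex of $C_{c}$ ends up with in-degree $2$. Hence we obtain a genuine decomposition of $D$ into $k+1$ pseudo-branchings.

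Finally I would compare residues. Only out-degrees in $C_{k+1}$ can change, and only at tails of exchanged arcs: the tails of the arcs removed from $C_{k+1}$ form the multiset $\{v_{0},v_{2},\dots,v_{2i-2}\}$, while the tails of the arcs added to $C_{k+1}$ form $\{v_{2},v_{4},\dots,v_{2i}\}$. These two multisets differ only in one extra copy of $v_{0}$ on the first side and one extra copy of $v_{2i}$ on the second; note $v_{0}\ne v_{2i}$ since $d_{C_{k+1}}^{+}(v_{0})>d>d_{C_{k+1}}^{+}(v_{2i})$. Therefore $d_{C_{k+1}}^{+}(v_{0})$ decreases by exactly $1$, $d_{C_{k+1}}^{+}(v_{2i})$ increases by exactly $1$, and all other out-degrees in $C_{k+1}$ are untouched. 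Since $d_{C_{k+1}}^{+}(v_{0})\ge d+1$, the summand $\max\{d_{C_{k+1}}^{+}(v_{0})-d,0\}$ of the residue drops by $1$; since $d_{C_{k+1}}^{+}(v_{2i})\le d-1$, the summand for $v_{2i}$ stays $0$. So the new residue equals $\lambda-1<\lambda$, contradicting the minimality of $\lambda$, and the claim follows.

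I expect the only genuinely delicate point to be the pseudo-branching verification for the parts $C_{c}$ ($c\le k$) once the construction is carried out for all $j$ at once: a priori several even arcs $a_{2j}$ could sit in the same part, and one must check that the simultaneous swaps do not create an in-degree-$2$ vertex there — which is exactly why the distinctness of $v_{1},v_{3},\dots,v_{2i-1}$ is needed. The remaining ingredients (the truncation and the residue count via multiset cancellation) are routine.
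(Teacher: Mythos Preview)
Your argument is correct and follows essentially the same approach as the paper: assume a counterexample index, swap each pair $a_{2j-1}\leftrightarrow a_{2j}$ between $C_{k+1}$ and the part $C_{c_j}$ containing $a_{2j}$, and derive a decomposition with residue $\lambda-1$. The paper chooses the \emph{minimum} failing index $i_0$ and performs the swaps sequentially, but neither point is essential; your version, which takes an arbitrary failing $i$, proves the needed distinctness of $v_1,v_3,\dots,v_{2i-1}$, and tracks the out-degree changes via multiset cancellation, is a slightly more explicit rendering of the same idea.
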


\begin{proof}
Suppose otherwise, and $i_{0}$ is the minimum such that $1 \leq i_{0} \leq  t$  and $ d_{C_{k+1}}^{+}(v_{2i_{0}}) < d$. 
For $1 \leq i \leq i_{0}$, we suppose $\overleftarrow{a_{2i}} \in A(C_{j_{i}})$,  then  $C_{j_{i}} \neq C_{k+1}$.   

Do the following updates in turn: 
$ C_{k+1}: = C_{k+1}- \overrightarrow{a_{1}}+\overleftarrow{a_{2}}$ and $ C_{j_{1}}: = C_{j_{1}}-\overleftarrow{a_{2}}+\overrightarrow{a_{1}}$,  
$\ldots$,  $ C_{k+1}: = C_{k+1}- \overrightarrow{a_{2i_{0}-1}}+\overleftarrow{a_{2i_{0}}}$ and $ C_{j_{i_{0}}}: = C_{j_{i_{0}}}-\overleftarrow{a_{2i_{0}}}+\overrightarrow{a_{2i_{0}-1}}$. 
Observe that, after this series of updates, $C_{1}, \ldots,  C_{k}, C_{k+1}$ are still pseudo-branchings; $ d^{+}_{C_{k+1}}(v_{0})$ is decreased by $1$, $ d^{+}_{C_{k+1}}(v_{2i_{0}})$ is increased by $1$, and $ d^{+}_{C_{k+1}}(v)$ does not  change for $v \neq v_{0}$ or $v_{2i_{0}}$. Hence, $\sum_{v \in V(D)} \max \{d^{+}_{C_{k+1}}(v)-d, 0 \}$ is decreased by $1$, this contradicts that originally we have picked a decomposition with the minimum residue $\lambda$.  
\end{proof}
 
 
Next we define 	
 \[
  	V(v_0) :=  \cup V(Q),  \mbox{ and }  A(v_0) :=  \cup A(Q), \mbox{ where } {{Q} \mbox{ is an alternating trail}   \mbox{ from } v_0}. 
 \]  
Let $ D(v_0)  = (V(v_0), A(v_0))$. The following observation comes from the definitions.   
\begin{observation} \label{coboundary} 
	\begin{itemize}
		\item[(i)] 	$\partial_{C_{k+1}}^{+}(v_0) \subseteq A(v_0)$; 
		
		\item[(ii)] For $v \in V(v_0)$, if $d_{D(v_0)}^{-}(v) > 0$, then $\partial_{D}^{-}(v) \subseteq A(v_0)$;
		
		\item[(iii)] 	For $v \in V(v_0)$, if $d_{D(v_0)}^{+}(v) >0$, then $\partial_{C_{k+1}}^{+}(v) \subseteq A(v_0)$. 
	\end{itemize}
\end{observation}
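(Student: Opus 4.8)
The plan is to establish the three items of Observation~\ref{coboundary} directly from the definitions of an alternating trail, $V(v_0)$ and $A(v_0)$, relying on two structural facts: an alternating trail may be truncated after any one of its vertices (the definition permits an odd number of arcs, obtained by dropping the last arc), and $d^{-}_{C_{k+1}}(v)\le 1$ for every $v$ since $C_{k+1}$ is a pseudo-branching. Item~(i) is immediate: for each $\overrightarrow{v_0w}\in\partial_{C_{k+1}}^{+}(v_0)$ the one-arc sequence $v_0\,\overrightarrow{v_0w}\,w$ is itself an alternating trail from $v_0$, so $\overrightarrow{v_0w}\in A(v_0)$.

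For (ii) and (iii) I would first record the positional structure of an alternating trail $v_0\overrightarrow{a_1}v_1\overleftarrow{a_2}v_2\cdots$: the head of every arc $a_j$ is an odd-indexed vertex (for $a_{2i-1}=\overrightarrow{v_{2i-2}v_{2i-1}}$ this is $v_{2i-1}$ by definition, and $\overleftarrow{a_{2i}}=\overleftarrow{v_{2i-1}v_{2i}}$ again has head $v_{2i-1}$), whereas the tail of every arc is $v_0$ or an even-indexed vertex $v_{2i}$. Hence any arc of $A(v_0)$ with head $v$ makes $v=v_{2i-1}$ on some alternating trail $Q$, so that the $C_{k+1}$-arc $a_{2i-1}\in A(Q)\subseteq A(v_0)$ is the unique arc of $C_{k+1}$ into $v$; and any arc of $A(v_0)$ with tail $v$ makes $v=v_0$ or $v=v_{2i}$ on some alternating trail.

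To finish (ii), let $v$ be as in its hypothesis, fix a trail $Q$ with $v=v_{2i-1}$ as above, and let $Q'$ be $Q$ truncated after $v$ (an alternating trail with an odd number of arcs ending with $a_{2i-1}\in A(C_{k+1})$). Given $b\in\partial_D^{-}(v)$: if $b=a_{2i-1}$ then $b\in A(v_0)$; otherwise $b\notin A(C_{k+1})$ by uniqueness of the $C_{k+1}$-in-arc of $v$, and either $b$ already lies on $Q'$, or appending $b$ to $Q'$ as an even step $v\,\overleftarrow{b}\,u$, where $u$ is the tail of $b$, yields an alternating trail; in either case $b\in A(v_0)$, which proves $\partial_D^{-}(v)\subseteq A(v_0)$. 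To finish (iii), let $v$ be as in its hypothesis; if the witnessing tail vertex is $v_0$ then (i) applies, so assume $v=v_{2i}$ on some trail $Q$ and let $Q''$ be $Q$ truncated after $v$ (an alternating trail with an even number of arcs). For $c\in\partial_{C_{k+1}}^{+}(v)$, either $c$ already lies on $Q''$, or $Q''$ extended by the odd step $v\,\overrightarrow{c}\,w$, where $w$ is the head of $c$, is an alternating trail; in either case $c\in A(v_0)$, giving $\partial_{C_{k+1}}^{+}(v)\subseteq A(v_0)$.

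The step I expect to demand the most care is the positional case analysis above — checking that heads of trail arcs occur only at odd-indexed vertices and tails only at $v_0$ or even-indexed vertices — together with the degenerate case in which the arc we wish to adjoin already occurs on the chosen truncated trail: adjoining it would violate the distinctness requirement in the definition of an alternating trail, but then the arc already belongs to $A(v_0)$, so nothing is lost. Beyond that, the argument is a routine application of the truncate-and-extend principle and the bound $d^{-}_{C_{k+1}}(v)\le 1$.
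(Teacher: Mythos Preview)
Your proof is correct and is precisely the unfolding of the definitions that the paper leaves implicit (the paper states only that ``the following observation comes from the definitions'' and gives no explicit argument). Your truncate-and-extend argument, together with the uniqueness of the $C_{k+1}$-in-arc at odd-indexed vertices, is exactly what is needed, and your handling of the degenerate case where the arc to be appended already lies on the truncated trail is the right way to respect the distinctness requirement.
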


Let $Z_1 = \{v \in V(v_0): d_{D(v_0)}^{-}(v)>0\}$, $Z_{2}=V(v_0) \setminus Z_{1}$. 
For $v \in Z_{1}$, since $d_{D(v_0)}^{-}(v)>0$, by Observation~\ref{coboundary} $(ii)$, we have $\partial_{D}^{-}(v) \subseteq A(v_0)$; applying  Claim~\ref{in-degree=k+1-v},  we have $d_{D(v_0)}^{-}(v) = d_{D}^{-}(v) = k+1$;
 thus $|A(v_0)| = \sum_{v \in Z_{1}} d_{D(v_0)}^{-}(v)=(k+1) |Z_{1}|$.   
 
For $v \in Z_{2}$, since $d_{D(v_0)}^{-}(v)=0$, we have $d_{D(v_0)}^{+}(v) >0 $.   
By Observation~\ref{coboundary} $(iii)$, $\partial_{C_{k+1}}^{+}(v) \subseteq A(v_0)$. 
Since each $C_{k+1}$ is a  pseudo-branching, for any vertex $u$, $d_{C_{k+1}}^{-}(u) \leq 1$; thus, in $D(v_0)$, each vertex in $Z_{2}$ has at least  $d_{C_{k+1}}^{+}(v)$  
distinct out-neighbors in $Z_{1}$, thus    
 $|Z_{1}| \geq \sum_{v \in Z_{2}}  d_{C_{k+1}}^{+}(v)$.  
Applying Claim~\ref{v_{2i}-geqk}, we have $d_{C_{k+1}}^{+}(v) \geq d$. 

By Observation~\ref{coboundary} $(i)$, $\partial_{C_{k+1}}^{+}(v_{0}) \subseteq A(v_0)$.  Recall that $d_{C_{k+1}}^{+}(v_{0}) > d$. This proves $|Z_{1}| > d|Z_{2}|$ (no matter if $v_0 \in Z_1$ or  $v_0 \in Z_2$). 
It follows that   
\[
\frac{1}{2} \mad(D) \geq \frac{1}{2} \mad(D(v_0)) \geq   \frac{|A(v_0)|}{|V(v_0)|}=\frac{(k+1)|Z_{1}|}{|Z_{1}|+ |Z_{2}|} 
> \frac{(k+1)|Z_{1}|}{|Z_{1}| + \frac{1}{d}|Z_{1}|} =  k + \frac{d-k}{d+1}. 
\]

This contradicts the assumption that $\frac{1}{2} \mad(D) \leq  k + \frac{d-k}{d+1}$, and finishes the proof. 
\end{proof}

\section{Sharpness of the bounds} 
\label{Section-lower-B}

In this section, by constructing a series of acyclic bipartite digraphs, we prove  Theorem~\ref{fra-arb-bound-tight}, which says the bound of  $ \gamma(D)$ given in Conjecture~\ref{directed-NDT-conj} is best possible. The same  digraphs will be used to show the bound of  $ \mad(D)$ in Theorem \ref{main-pseudo-branching-dec}   
is best possible, this shall be explained at the end of this section. 

Suppose $G = (V,E)$ is a graph and $D$ a digraph on the same vertex set $V$. For $X, Y \subseteq V$, 
denote by $[X,Y]_{G}$ the number of edges $uv \in E(G)$ with $u \in X$  and $v \in Y$,  by $[X,Y]_{D}$ the number of arcs $\overrightarrow{uv} \in A(D)$ with $u \in X$ and $v \in Y$. 


 
\smallskip

\noindent
{\bf Theorem~\ref{fra-arb-bound-tight}}.  {\it   
	For positive integers $k, d$ and any $\epsilon > 0$, there exists an acyclic bipartite digraph $D$ such that 
	$ \gamma(D) <  k + \frac{d-k}{d+1} + \epsilon$ 
	and $\Delta^{-}(D) \leq k+1$, but $D$ can not decompose into $k + 1$ branchings $B_{1}, \ldots, B_{k}, B_{k+1}$ with $\Delta^{+}(B_{k+1}) \leq d$.
}


\smallskip

\begin{proof}
	For positive integers $k, d$ and any $\epsilon > 0$, 	
	since $\lim\limits_{n \rightarrow \infty} \limits{ \frac{d(k+1)n}{(d+1)n-1}} =  \frac{d(k+1)}{d+1} < k + \frac{d-k}{d+1} + \epsilon$, there is an integer $n$  such that $ \frac{d(k+1)n}{(d+1)n-1} < k + \frac{d-k}{d+1} + \epsilon$ and $n \geq k+1$. 
	Define   
	 $U = \{u_{0}, \ldots, u_{n-1} \}$; $\mathbb{Z}_{n} = \{0, \ldots, n-1\}$.
	 For convenience,  
	for $t \in \mathbb{Z}$ (or index $t$ in $u_t$),  since $(i+tn)\mod{n} = i$, we  regard $i+tn$ as $i$.

	For each $i \in \mathbb{Z}_{n}$, let $W_{i}$ be a set of $d$ vertices, such that, for distinct $i, j$, the sets $W_{i}$ and $W_{j}$ are disjoint from each other.  
	Define $D_0$ as a bipartite digraph with vertex bipartition $U$ and $\cup_{i \in \mathbb{Z}_{n}} W_i$, and 
	\[
	\begin{split}	
	A(D_0) & = \{\overrightarrow{u_{i+j}w}: u_{i+j} \in U,  w \in W_i, \mbox{ and } 0 \le j \le k \} \\ 
	& = \{\overrightarrow{u_iw}:  u_i \in U,  w \in W_{i-j}, \mbox{ and } 0 \le j \le k \}. 
	\end{split}
	\] 
	
	By taking two disjoint copies $D_{1}, D_{2}$ of $D_{0}$ and identifying two copies of $u_{0}$ into a single vertex $u_{0}^{*}$, we obtain a digraph $D$. We shall show that $D$ satisfies the theorem.   
	It is clear that $\Delta^{-}(D)=k+1$, $D$ is acyclic and bipartite. 
	 	
	\begin{claim}\label{u-out-degree}
		Suppose $D_{0}$ decomposes into $k + 1$ branchings $B'_{1}, \ldots, B'_{k}$,  $B'_{k+1}$ such that $\Delta^{+}(B'_{k+1})$ $\leq$ $d$, then $d_{B'_{k+1}}^{+}(u)=d$ for all $u \in U$. 
	\end{claim}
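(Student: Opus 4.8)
The plan is to use a counting argument comparing arc numbers and vertex numbers inside $D_0$, exploiting that every vertex $u \in U$ has in-degree $0$ and every vertex $w \in W_i$ has in-degree exactly $k+1$ in $D_0$. First I would observe that $|A(D_0)| = (k+1)\cdot d \cdot n$, since each $w \in \cup_i W_i$ has $d^-_{D_0}(w) = k+1$ and $|\cup_i W_i| = dn$; equivalently, each $u \in U$ has $d^+_{D_0}(u) = (k+1)d$ and $|U| = n$. Now suppose $D_0$ decomposes into $k+1$ branchings $B'_1,\ldots,B'_{k+1}$ with $\Delta^+(B'_{k+1}) \le d$. Since each $B'_i$ is a branching, $\Delta^-(B'_i) \le 1$, so each $w \in W_i$ receives at most one arc in each $B'_i$; because $d^-_{D_0}(w) = k+1$ and there are $k+1$ branchings, $w$ must receive \emph{exactly} one arc in \emph{each} $B'_i$. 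In particular, in $B'_{k+1}$ every vertex of $\cup_i W_i$ has in-degree $1$, so $|A(B'_{k+1})| \ge \sum_{w} d^-_{B'_{k+1}}(w) = dn$. On the other hand, the arcs of $B'_{k+1}$ all go from $U$ to $\cup_i W_i$ (the digraph is bipartite in this direction), so $|A(B'_{k+1})| = \sum_{u \in U} d^+_{B'_{k+1}}(u) \le d\,|U| = dn$ using $\Delta^+(B'_{k+1}) \le d$.

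Combining the two inequalities forces $|A(B'_{k+1})| = dn$ and hence $\sum_{u\in U} d^+_{B'_{k+1}}(u) = dn = d\,|U|$. Since each term $d^+_{B'_{k+1}}(u) \le d$, equality of the sum with $d\,|U|$ forces $d^+_{B'_{k+1}}(u) = d$ for every $u \in U$, which is exactly the claim.

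I expect the only subtlety — rather than a real obstacle — is being careful that in $D_0$ every arc points from $U$ into some $W_i$ (so that the out-degree sum over $U$ really equals the total arc count of any sub-branching, and there are no arcs among the $W_i$'s or among the $u$'s), and that $d^-_{D_0}(w) = k+1$ holds for \emph{every} $w$ (which is immediate from the definition of $A(D_0)$, as $w \in W_i$ receives exactly the arcs $\overrightarrow{u_{i+j}w}$ for $0 \le j \le k$). Everything else is a two-way counting squeeze, and the argument does not even need the acyclicity or the parameter $n \ge k+1$ at this stage.
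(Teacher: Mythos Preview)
Your proof is correct and is essentially identical to the paper's: both argue that each $w\in\cup_i W_i$ has $d^-_{B'_{k+1}}(w)=1$ (since its $k+1$ incoming arcs must be split among $k+1$ branchings each contributing at most one), then double-count $|A(B'_{k+1})|$ as $\sum_w d^-_{B'_{k+1}}(w)=dn$ and as $\sum_{u\in U} d^+_{B'_{k+1}}(u)\le dn$, forcing equality termwise. The only cosmetic difference is that the paper writes the chain as a single equality string, whereas you phrase it as a two-sided squeeze.
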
 
	

	\begin{proof}
		For $w \in \cup_{i \in \mathbb{Z}_{n}}W_{i}$,    
		since $k+1=d^{-}_{D_{0}}(w)= \sum_{j=1}^{k+1}d_{B'_{j}}^{-}(w) \le k+1$ 
		(the last inequality comes from  the fact that $B'_{j}$ is a branching, thus $d_{B'_{j}}^{-}(w) \leq 1 $), we deduce that 
		$d_{B'_{j}}^{-}(w) = 1 $ (for  $1 \leq j \leq k+1$).    
		So  
		\[
		\begin{split}
		dn=|\cup_{i \in \mathbb{Z}_{n}}W_{i}|=\sum_{w \in \cup_{i \in \mathbb{Z}_{n}}W_{i}} d_{B'_{k+1}}^{-}(w) & =[U, \cup_{i \in \mathbb{Z}_{n}}W_{i}]_{B'_{k+1}} \\
		& =\sum_{u \in U} d_{B'_{k+1}}^{+}(u) \leq \Delta^{+}(B'_{k+1}) n \leq dn. 
		\end{split} 
		\]
		Therefore  $d_{B'_{k+1}}^{+}(u)=d$ for all $u \in U$. 
	\end{proof}

Assume that $D$ decomposes into $k + 1$ branchings $B_{1}, \ldots, B_{k}, B_{k+1}$ with $\Delta^{+}(B_{k+1}) \leq d$.    
Then $D_{l}$ ($l=1$, $2$) decomposes into branchings $B_{1}[V(D_{l})], \ldots, B_{k+1}[V(D_{l})] $ with $\Delta^{+}(B_{k+1}[V(D_{l})] ) \leq d$.  
By Claim~\ref{u-out-degree}, $d_{B_{k+1}[V(D_{l})]}^{+}(u_{0}^{*})=d$. Thus $d_{B_{k+1}}^{+}(u_{0}^{*})=2d$, this contradicts that $\Delta^{+}(B_{k+1}) \leq d$.  
This proves that  $D$ can not decompose into $k + 1$ branchings $B_{1}, \ldots, B_{k}, B_{k+1}$ with $\Delta^{+}(B_{k+1}) \leq d$.  

In the rest of the proof, it suffices for us to show that $\gamma(D) <  k + \frac{d-k}{d+1} + \epsilon$. 
Note that 
\begin{equation} \label{den-ALL}
\frac{|A(D)|}{|V(D)|-1} = \frac{2dn(k+1)}{(2(dn+n)-1)-1} =  \frac{d(k+1)n}{(d+1)n-1} = \frac{|A(D_0)|}{|V(D_0)|-1}  <  k + \frac{d-k}{d+1} + \epsilon.  
\end{equation}

By the construction and symmetry between $D_1$ and $D_2$ of $D$, we have  $\gamma(D_{0})=\gamma(D)$. To see this, let $\emptyset \neq X \subseteq V(D)$. If $X \subseteq V(D_{1}) $ or $V(D_{2})$, then by definition, $|A[X]| \leq \gamma(D_{0})(|X|-1)$. Otherwise, $|A[X]| = |A[X_{1}]|+|A[X_{2}]|$ and $|X_{1}|+|X_{2}| \leq |X|+1$, where $X_{l}=X \cap V(D_{l}) $ for $l=1, 2$. Then we have $|A[X]| = |A[X_{1}]|+|A[X_{2}]| \leq \gamma(D_{0})(|X_{1}|-1)+\gamma(D_{0})(|X_{2}|-1) \leq \gamma(D_{0})(|X|-1)$.  Hence, if $|X|>1$, then $\frac{|A[X]|}{|X|-1} \leq \gamma(D_{0})$; thus $\gamma(D) \leq \gamma(D_{0})$. Since $D_{0}$ is isomorphic to a subdigraph of $D$, we have $\gamma(D_{0}) \leq \gamma(D)$. So $\gamma(D_{0}) = \gamma(D)$. 

\emph{Suppose $ V^{*} \subseteq V(D_{0})$, $D_{0}[V^{*}] $ witnesses $\gamma(D_{0})$, and subject to this, $V^{*}$ is maximal.}  
In Claim \ref{real-subset-V-C} we shall prove 
that if $ V^{*} \subsetneqq V(D_{0})$, then  $\frac{|A[V^{*}]|}{|V^{*}|-1} \leq  k + \frac{d-k}{d+1}$. 
Since    
$\frac{|A(D_0)|}{|V(D_0)|-1} = \frac{d(k+1)n}{(d+1)n-1}  > \frac{d(k+1)n}{(d+1)n} =  k + \frac{d-k}{d+1}$, we have $ V^{*} = V(D_{0})$.
Then by Inequality (\ref{den-ALL}), this will prove  $\gamma(D) <  k +  \frac{d-k}{d+1} + \epsilon$.   




\begin{claim} \label{W-part-all}
For $i_{0} \in \mathbb{Z}_{n}$, if $V^{*} \cap W_{i_{0}} \neq \emptyset$, then $W_{i_{0}}  \subseteq V^{*}$.   
\end{claim}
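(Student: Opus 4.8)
The plan is to exploit that the $d$ vertices of $W_{i_0}$ are \emph{twins} in $D_0$: each of them has in-neighbourhood exactly $\{u_{i_0-j}:0\le j\le k\}$ and out-degree $0$ (all arcs of $D_0$ run from $U$ to $\bigcup_{i\in\mathbb{Z}_n}W_i$). Hence, if one vertex $w\in W_{i_0}$ lies in $V^*$ while some other vertex $w'\in W_{i_0}$ does not, then adding $w'$ to $V^*$ and deleting $w$ from $V^*$ change the arc count of the induced subdigraph by the same amount, with opposite signs. The maximality of $V^*$ limits what adding $w'$ can do, and the maximality (over all subdigraphs) of the density $\gamma(D_0)$ limits what deleting $w$ can do; combining the two bounds yields a contradiction.

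Concretely, the case $V^*=V(D_0)$ is trivial, so I would assume $V^*\subsetneq V(D_0)$ and, for contradiction, that $V^*\cap W_{i_0}\ne\emptyset$ while $W_{i_0}\not\subseteq V^*$; fix $w\in V^*\cap W_{i_0}$ and $w'\in W_{i_0}\setminus V^*$. Write $\gamma=\gamma(D_0)$, $a=|A[V^*]|$ and $b=|V^*|-1$, so that $a=\gamma b$ by the choice of $V^*$. Let $c=|N_{D_0}^{-}(w)\cap V^*|$. Since $w$ has no out-arcs in $D_0$, $c$ equals the number of arcs incident with $w$ in $D_0[V^*]$, so $|A[V^*\setminus\{w\}]|=a-c$; and since $N_{D_0}^{-}(w')=N_{D_0}^{-}(w)$ with $w'\notin V^*$, inserting $w'$ creates exactly $c$ new arcs, so $|A[V^*\cup\{w'\}]|=a+c$.

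Now I would run the two estimates. Since $V^*\cup\{w'\}\supsetneq V^*$ and $V^*$ is maximal among vertex sets whose induced subdigraph witnesses $\gamma$, the subdigraph $D_0[V^*\cup\{w'\}]$ cannot be a witness, so $\frac{a+c}{b+1}<\gamma=\frac{a}{b}$, which rearranges to $c<\gamma$. On the other hand, provided $b\ge 2$, the subdigraph $D_0[V^*\setminus\{w\}]$ has more than one vertex and is thus admissible in the definition of $\gamma(D_0)$, so $\frac{a-c}{b-1}\le\gamma=\frac{a}{b}$, which rearranges to $c\ge\gamma$; this contradicts $c<\gamma$ and proves the claim.

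The one bookkeeping point is the hypothesis $b\ge 2$, i.e.\ $|V^*|\ge 3$. This holds because $\gamma(D_0)=\gamma(D)\ge\frac{|A(D)|}{|V(D)|-1}=\frac{d(k+1)n}{(d+1)n-1}>\frac{d(k+1)}{d+1}\ge 1$ (the last inequality since $d(k+1)\ge d+1$ when $d,k\ge 1$), whereas $D_0$ is a simple bipartite digraph, so $|V^*|=2$ would force $\gamma(D_0)=|A[V^*]|\le 1$, a contradiction. I expect the only delicate parts to be this size/degree accounting and the realisation that $W_{i_0}$ consists of twins; the exchange step itself is only a line or two of arithmetic.
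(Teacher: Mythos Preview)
Your proof is correct and follows essentially the same route as the paper's: both exploit that the vertices of $W_{i_0}$ are twins (identical in-neighbourhoods, no out-arcs), deduce $c\ge\gamma$ from the deletion $V^*\setminus\{w\}$, and combine this with the addition $V^*\cup\{w'\}$ and the maximality of $V^*$ to reach a contradiction. The only cosmetic difference is the order of the two steps; your explicit verification that $|V^*|\ge 3$ (needed to divide by $b-1$) is a detail the paper leaves implicit.
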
 

\begin{proof}  
	Since  $D_{0}[V^{*}] $ witnesses $\gamma(D_{0})$, $\frac{|A[V^{*}]|}{|V^{*}|-1} = \gamma(D_{0})$. Then for  $w \in V^{*} \cap W_{i_{0}}$,    we have
	\[
		\frac{\gamma(D_{0})(|V^{*}|-1)-d_{D_{0}}^{-}(w)}{(|V^{*}|-1)-1}= 	\frac{|A[V^{*}]|-d_{D_{0}}^{-}(w)}{(|V^{*}|-1)-1}=\frac{|A[V^{*}-w]|}{|V^{*}-w|-1}  \leq \gamma(D_{0}).
	\] 	
	This gives us $d_{D_{0}}^{-}(w) \geq \gamma(D_{0})$.

	Assume there exists $w' \in W_{i_{0}} \setminus V^{*}$.  Add $w'$ to $D_{0}[V^{*}]$. Note that $d_{D_{0}[V^{*}+w']}^{-}(w')=d_{D_{0}}^{-}(w)\geq \gamma(D_{0})$. Then  	
	$$\gamma(D_{0}[V^{*}+w']) \geq \frac{|A[V^{*}+w']|}{|V^{*}+w'|-1}= \frac{|A[V^{*}]|+d_{D_{0}[V^{*}+w']}^{-}(w') }{(|V^{*}|-1)+1} \geq \gamma(D_{0}). $$ 
	Therefore $D_{0}[V^{*}+w']$ also witnesses $\gamma(D_{0})$, this contradicts  the maximality of $V^{*}$.   
\end{proof} 


%

%


%
%

By Claim \ref{W-part-all}, suppose $V^{*} = U^{*} \cup (\cup_{i \in I}W_{i})$, where $U^{*} \subseteq U$, $I \subseteq \mathbb{Z}_{n}$. Let $D^{*} = D_{0}[V^{*}] $, then $D^*$ is a bipartite graph with vertex bipartition $U^*$ and $\cup_{i \in I} W_i$, 
and $|V(D^{*})|=|U^{*}|+|\cup_{i \in I}W_{i}|=|U^{*}|+d|I|$.     

\begin{claim} \label{real-subset-V-C}
	If $V^{*} \subsetneqq  V(D_{0}) $,  then 
	$\frac{|A(D^{*})|}{|V(D^{*})|-1} = \frac{|A[V^{*}]|}{|V^{*}|-1} \leq \frac{d(k+1)}{d+1} = k + \frac{d-k}{d+1}$.  
\end{claim}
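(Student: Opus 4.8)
The plan is to translate the claim into a counting estimate on subsets of $\mathbb{Z}_{n}$. By Claim~\ref{W-part-all} we may write $V^{*}=U^{*}\cup\bigcup_{i\in I}W_{i}$ with $U^{*}\subseteq U$ and $I\subseteq\mathbb{Z}_{n}$; identify $U^{*}$ with $\{a\in\mathbb{Z}_{n}:u_{a}\in U^{*}\}\subseteq\mathbb{Z}_{n}$, put $p=|U^{*}|$, $q=|I|$, and for $x\in\mathbb{Z}_{n}$ write $[x,x+k]=\{x,x+1,\ldots,x+k\}$ (indices mod $n$). Every arc of $D^{*}$ has the form $\overrightarrow{u_{a}w}$ with $a\in U^{*}$ and $w$ one of the $d$ vertices of some $W_{b}$ with $b\in I\cap[a-k,a]$, so counting such arcs from the $W$-side and from the $U$-side yields
\[
|A(D^{*})|=d\cdot e,\qquad e:=\sum_{i\in I}\bigl|[i,i+k]\cap U^{*}\bigr|=\sum_{a\in U^{*}}\bigl|[a-k,a]\cap I\bigr|;
\]
in particular $e\le(k+1)\min(p,q)$. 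Since $|V(D^{*})|-1=p+dq-1$, the claim is equivalent to $(d+1)e\le(k+1)(p+dq-1)$, and I may assume $p,q\ge 1$ (otherwise $A(D^{*})=\emptyset$ and the claim is immediate).

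If $p\neq q$, the bound $e\le(k+1)\min(p,q)$ already suffices: when $q>p$ the desired inequality reduces, after cancelling $k+1$, to $d(q-p)\ge 1$; when $q<p$ it reduces to $p\ge q+1$. Hence the only substantial case is $p=q=:m$, and now $V^{*}\subsetneq V(D_{0})$ forces $m<n$ (otherwise $U^{*}=U$ and $I=\mathbb{Z}_{n}$, i.e.\ $V^{*}=V(D_{0})$). For this case I will establish the sharper bound $e\le(k+1)m-k$. Granting it, $(d+1)e\le(d+1)\bigl((k+1)m-k\bigr)\le(k+1)\bigl((d+1)m-1\bigr)$, where the second inequality is exactly $(d+1)k\ge k+1$, i.e.\ $dk\ge 1$; and $(k+1)\bigl((d+1)m-1\bigr)=(k+1)(p+dq-1)$, as required.

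To prove $e\le(k+1)m-k$ when $p=q=m<n$, I split on the size of $m$. If $m\le k$, each summand satisfies $|[i,i+k]\cap U^{*}|\le|U^{*}|=m$, so $e\le m^{2}$, and $m^{2}\le(k+1)m-k$ since $(m-1)(m-k)\le 0$. If $m>k$, let $I_{0}=\{i\in I:[i,i+k]\subseteq U^{*}\}$; bounding the $|I_{0}|$ summands indexed by $I_{0}$ by $k+1$ and the other $m-|I_{0}|$ summands by $k$ gives $e\le km+|I_{0}|$, so it remains to show $|I_{0}|\le m-k$. This is trivial if $I_{0}=\emptyset$, and if $I_{0}\neq\emptyset$ it follows from $U^{*}\supseteq\bigcup_{i\in I_{0}}[i,i+k]=\bigcup_{t=0}^{k}(I_{0}+t)$ combined with the elementary lemma that $\bigl|\bigcup_{t=0}^{r}(S+t)\bigr|\ge\min(|S|+r,n)$ for every nonempty $S\subseteq\mathbb{Z}_{n}$ and every $0\le r\le n-1$. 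That lemma is proved by induction on $r$ using the fact that $|T\cup(T+1)|\ge|T|+1$ whenever $\emptyset\neq T\subsetneq\mathbb{Z}_{n}$ (if $T+1\subseteq T$ then, iterating, $T=\mathbb{Z}_{n}$); applying it with $S=I_{0}$ and $r=k$ (legitimate since $k\le n-1$ because $n\ge k+1$) and noting $\bigl|\bigcup_{t=0}^{k}(I_{0}+t)\bigr|\le|U^{*}|=m<n$ forces $|I_{0}|+k\le m$.

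I expect the balanced case $p=q=m$ to be the main obstacle. There the crude degree bound $e\le(k+1)m$ only gives the ratio $\tfrac{d(k+1)m}{(d+1)m-1}$, which strictly exceeds $\tfrac{d(k+1)}{d+1}$, so one genuinely needs to recover a slack of at least $k$ in $e$; the cyclic-expansion lemma above is what provides it, and it is also what makes the argument uniform over all positive $k,d$ (for $k>d$ a slack of only $1$ would not be enough).
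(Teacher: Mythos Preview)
Your proof is correct, and in the balanced case $p=q=m<n$ it follows a genuinely different route from the paper's.

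The paper and you agree on the setup (writing $|A(D^{*})|=d\,e$ with $e=[U^{*},I]_{G_{0}}$) and on the two unbalanced cases $p\neq q$, which both of you dispatch with the crude bound $e\le(k+1)\min(p,q)$. The divergence is in how the slack is produced when $p=q=m<n$. The paper proves $e\le(k+1)m-\tfrac{k+1}{2}$ by decomposing the edges between $U$ and $I$ in the auxiliary bipartite graph $G_{0}$ into the $k+1$ perfect matchings $M_{j}=\{u_{i+j}i:i\in I\}$ and arguing by contradiction: if fewer than $\tfrac{k+1}{2}$ edges go from $U\setminus U^{*}$ to $I$, then more than half the $M_{j}$ are contained in $[U^{*},I]$, which (after a short parity case analysis) forces two consecutive matchings $M_{j_{0}},M_{j_{0}+1}$ to lie in $[U^{*},I]$, hence $I=I+1=\mathbb{Z}_{n}$. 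You instead prove the sharper inequality $e\le(k+1)m-k$ directly: for $m\le k$ via $e\le m^{2}=(m-1)(m-k)+(k+1)m-k$, and for $m>k$ by isolating the ``full'' indices $I_{0}=\{i\in I:[i,i+k]\subseteq U^{*}\}$ and bounding $|I_{0}|$ through the cyclic expansion lemma $\bigl|\bigcup_{t=0}^{k}(I_{0}+t)\bigr|\ge\min(|I_{0}|+k,n)$.

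What each approach buys: the paper's matching argument is tied to the specific circulant structure of $D_{0}$ and yields a weaker constant ($\tfrac{k+1}{2}$ rather than $k$), but that constant is already enough once $d\ge 1$. Your expansion lemma is a clean, case-free statement about translates in $\mathbb{Z}_{n}$; it avoids the parity split in the paper's Claim~\ref{edges-between-U0-I-c}, gives a tighter bound, and would continue to work in any setting where the ``full'' windows $[i,i+k]$ sit inside a set of size $m<n$. Both arguments ultimately hinge on the same phenomenon (a proper subset of $\mathbb{Z}_{n}$ cannot be invariant under the shift $x\mapsto x+1$), but you package it more economically.
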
	


\begin{proof}  
If $|U^{*}|>|I| $, then $|V(D^{*})| \geq |I|+1 + d|I|  = (d+1)|I|+1 $.   
For any $w \in \cup_{i \in I}W_{i}$,  we have $d_{D^{*}}^{-}(w) \leq d_{D_{0}}^{-}(w) \leq k+1$.   Thus 
\[
\frac{|A(D^{*})|}{|V(D^{*})|-1}= \frac{\sum_{w \in \cup_{i \in I}W_{i} } d_{D^{*}}^{-}(w) }{|V(D^{*})|-1} \leq \frac{d|I|(k+1)}{(d+1)|I|}=\frac{d(k+1)}{d+1}.   
\]

If $|U^{*}|<|I| $, then $|V(D^{*})| \geq |U^{*}|+d(|U^{*}|+1) \geq (d+1)|U^{*}|+1 $.  For any $u \in U^{*}$, we have $d_{D^{*}}^{+}(u) \leq d_{D_{0}}^{+}(u) \leq d(k+1)$. Thus 
\[ 
\frac{|A(D^{*})|}{|V(D^{*})|-1}= \frac{\sum_{u \in U^{*}} d_{D^{*}}^{+}(u)  }{|V(D^{*})|-1} \leq \frac{d(k+1)|U^{*}|}{(d+1)|U^{*}|}=\frac{d(k+1)}{d+1}. 
\]

%


In the rest of proofs, we suppose $|U^{*}|=|I|<n$, then $|V(D^{*})|=(d+1)|I|$.  
Suppose $G_{0}$ is a bipartite graph with vertex bipartition $U$ and  $\mathbb{Z}_{n}$, $E(G_0) = \{iu_{i+j}:  i \in \mathbb{Z}_{n}, \mbox{ and } 0 \le j \le k \}$. Then  $G_{0}$ is $(k+1)$-regular. 

\begin{claim} \label{edges-between-U0-I-c} 
	Suppose $U^{*} \subsetneqq U$, $I \subsetneqq \mathbb{Z}_{n} $, and $|U^{*}|=|I|$. Then $[U \setminus U^{*}, I]_{G_{0}} \geq \frac{k+1}{2}$. 
\end{claim} 


\begin{proof} 
	Suppose $E^{*}$ is the set of edges between $ U^{*}$ and $I$ in $G_0$.   		
	For $0 \leq j \leq k$, let $M_{j} := \{u_{i+j}i \in E(G_{0}) : i \in I\}$, then $M_j$ is a matching of $G_0$, and  the  set of edges between $U$ and $I$ in $G_0$ are the disjoint union of $M_{0}, \ldots, M_{k}$.  
	We prove the claim by contradiction. Assume that $[U \setminus U^{*}, I]_{G_{0}} < \frac{k+1}{2}$. Then less than $\frac{k+1}{2}$ of $\{M_{j} \}_{j=0}^{k}$ contain edges between $U \setminus U^{*}$ and $I$. So more than  
	$\frac{k+1}{2}$ of $\{M_{j} \}_{j=0}^{k}$ are contained in $E^{*}$. 
	
	Suppose there exists $0 \leq j_{0} \leq k$ such that $M_{j_{0}}, M_{j_{0}+1} \subseteq E^{*}$. 
	Since $|U^{*}|=|I|$, $M_{j_{0}} \subseteq E^{*}$ implies $U^{*}=\{u_{i}:i \in  I+j_{0} \}$, and $M_{j_{0}+1} \subseteq E^{*}$ implies $U^{*}=\{u_{i}:i \in I+j_{0}+1 \}$.  Hence, $I+j_{0}=I+j_{0}+1$. Thus $I=\mathbb{Z}_{n}$, but this  contradicts $I \subsetneqq \mathbb{Z}_{n}$.
	     	 
	Therefore we can suppose there exists no $j_{0}$ ($0 \leq j_{0} \leq k$) such that $M_{j_{0}}, M_{j_{0}+1} \subseteq E^{*}$.   
	Since more than $\frac{k+1}{2}$ of $\{M_{j} \}_{j=0}^{k}$ are contained in $E^{*}$, if $k$ is odd, then there exists some $0 \leq j_{0} \leq k$ such that $M_{j_{0}}, M_{j_{0}+1} \subseteq E^{*}$, this is a contradiction. 
	
	For the last case, we have $k$ is even, and exactly $\{M_{2i}: 0 \le i \le \frac{k}{2} \}$ are contained in $E^{*}$.  
	Since $|U^{*}|=|I|$, $M_{0} \subseteq E^{*}$ implies $U^{*}=\{u_{i}:i \in I \}$,  and $M_{2} \subseteq E^{*}$ implies $U^{*}=\{u_{i}:i \in I+2 \}$.   Hence, $I=I+2$. Thus $n$ is even, $I=2\mathbb{Z}_{n}$ or $I=2\mathbb{Z}_{n}+1$, and $U^{*}=\{u_{i}:i \in I \}$. Then $M_{1}$ is contained in the set of edges  between  $U \setminus U^{*}$ and $I$, thus  $[U \setminus U^{*}, I]_{G_{0}}  \geq |M_{1}|= \frac{n}{2} \ge \frac{k+1}{2}$, but this contradicts the assumption $[U \setminus U^{*},  I]_{G_{0}} < \frac{k+1}{2}$.  
\end{proof}

Since $[U^{*}, I]_{G_{0}}+ [U \setminus U^{*}, I]_{G_{0}}= [U, I]_{G_{0}}=(k+1)|I|$, Claim~\ref{edges-between-U0-I-c} implies $[U^{*}, I]_{G_{0}} = (k+1)|I| -[U \setminus U^{*}, I]_{G_{0}} \leq  (k+1)|I|- \frac{k+1}{2}  $. 

To count  $A(D^{*})$, note that for any $u \in U^{*}$ and $i \in I$, each edge $ui$ in $G_{0}$ corresponds to $d$ arcs from $u$ to $W_{i}$ in $D^{*}$. Hence, $|A(D^{*})|= [U^{*}, \cup_{i \in I}W_{i}]_{D}=d[U^{*}, I]_{G_{0}}$. Then 
\[
\frac{|A(D^{*})|}{|V(D^{*})|-1}= \frac{d[U^{*}, I]_{G_{0}}}{(d+1)|I|-1} \leq \frac{d(k+1)|I|-\frac{d(k+1)}{2}}{(d+1)|I|-1} \leq  \frac{d(k+1)|I|-\frac{d(k+1)}{d+1}}{(d+1)|I|-1}= \frac{d(k+1)}{d+1}.
\] 
This proves Claim \ref{real-subset-V-C}. 
\end{proof}
This finishes the proof of Theorem \ref{fra-arb-bound-tight}.   
\end{proof}

Theorem~\ref{fra-arb-bound-tight} shows the bound of Theorem~\ref{main-branching-dec} is sharp. For positive integers $k, d$ and $\epsilon >0$, let $D$ be the digraph as stated in Theorem~\ref{fra-arb-bound-tight}. Then $ \frac{1}{2}\mad(D) \leq \gamma(D) <  k + \frac{d-k}{d+1} + \epsilon$.  
Since $D$ is acyclic, a subdigraph of $D$ is a branching if and only if it is a pseudo-branching. Hence, $D$ can not decompose into $k+1$ pseudo-branchings $C_{1}, \ldots, C_{k}, C_{k+1}$ with $\Delta^{+}(C_{k+1}) \leq d$. This shows the bound of Theorem~\ref{main-pseudo-branching-dec} is sharp.

\bigskip

\noindent {\bf Acknowledgements:} We thank Professor Xuding Zhu for recommending this research topic to the authors.  We thank two anonymous referees for their detailed and constructive suggestions that help improve the presentation of this paper.

\end{document}